\title[Local structure of analytic stacks]{Local structure of the Teichm\"uller and the Riemann moduli stacks}
\author[A.K. Doan]{An Khuong Doan}
\address{Department of Mathematics, KU Leuven, Celestijnenlaan 200B, 3001 Leuven, Belgium.}\email{an-khuong.doan@kuleuven.be}
\address{CNRS-CMLS, UMR 7640, \'{E}cole polytechnique,  91128 Palaiseau Cedex, France.}
\subjclass[2010]{Primary 14D23; Secondary 14B12, 14L24, 14L30}
\date{August 26, 2024}
\definecolor{hot}{RGB}{65,105,225}
        \theoremstyle{plain}
        \newtheorem{theorem}{Theorem}[section]
        \newtheorem{corollary}[theorem]{Corollary}
        \newtheorem{lemma}[theorem]{Lemma}
        \newtheorem{question}[theorem]{Question}
        \newtheorem{proposition}[theorem]{Proposition}
        \theoremstyle{definition}
        \newtheorem{definition}[theorem]{Definition}
        \newtheorem{example}[theorem]{Example}
          \newtheorem{remark}[theorem]{Remark}
        \newtheorem*{example*}{Example}
        \theoremstyle{remark}
        \newtheorem*{remark*}{Remark}  
\numberwithin{equation}{section}
\newcommand{\stX}{\mathcal{X}} 
\newcommand{\stY}{\mathcal{Y}} 
\newcommand{\Sp}{\operatorname{Spec}}
\DeclareMathOperator{\Spec}{Spec}
\DeclareMathOperator{\Aut}{Aut}
\DeclareMathOperator{\isom}{\underline{\mathrm{Isom}}}
\begin{document}
\setcounter{tocdepth}{1}
\begin{abstract} The goal of this note is to introduce an interesting question proposed by D. Rydh on an analytic version of the local structure of Artin stacks saying that near points with linearly reductive stabilizer, Artin stacks are étale-locally quotient stacks. We give some supporting evidence by verifying it on two fundamental classes of classical analytic moduli spaces: the Teichm\"uller moduli space and the Riemann moduli space of integrable complex structures whose analytic stack versions have been constructed by a recent work of L. Meersseman. 
\end{abstract}
\maketitle
\tableofcontents
\section{Introduction}
The theory of algebraic stacks was initially introduced by A. Grothendieck (cf.  \cite{grothendieck-techniques})  in the early $1960$s and through many generalizations finally rediscovered by M. Artin in the famous paper \cite{artin-versal} in order to deal with interesting algebraic moduli problems but non-representable by usual objects: schemes, algebraic spaces, etc. The original aim is to keep track of automorphism groups on moduli spaces, which is somehow the main reason causing their bad behavior. Actually, there is a philosophy saying that near points with linearly reductive stabilizer (or equivalently with linearly reductive automorphism group), algebraic stacks should admit at least étale-locally tractable presentations as quotient stacks \cite{alper-quotient, alper-kresch}. It has been recently confirmed by the following groundbreaking result (cf. \cite{ahr2}).

\begin{theorem} \label{theorem1.1} Let $\mathcal{X}$ be a quasi-separated algebraic stack, locally of finite type over
an algebraically closed field $k$, with affine stabilizers. Let $x \in \mathcal{X}(k)$ be a point and
$H \subset G_x$ be a subgroup scheme of the stabilizer such that $H$ is linearly reductive and
$G_x/H$ is smooth (resp. étale). Then there exists an affine scheme $\Sp(A)$ with an
action of $H$, a $k$-point $w \in \Sp(A)$ fixed by $H$, and a smooth (resp. étale) morphism
$$f:([\Sp(A)/H],w) \rightarrow (\mathcal{X},x)$$
such that $BH\cong f^{-1}(BG_x)$;  in particular, $f$ induces the given inclusion $H \rightarrow G_x$
on stabilizer group schemes at $w$. In addition, if $\mathcal{X}$ has affine diagonal, then $f$ can be arranged to be affine.
\end{theorem}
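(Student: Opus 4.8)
The plan is to reduce the statement to a purely formal (complete-local) slice statement and then algebraize it. First I would shrink $\mathcal{X}$ to a finite-type open substack containing $x$ (permissible since $\mathcal{X}$ is quasi-separated and locally of finite type), and I would first treat the case $H = G_x$, recovering the general case at the end by composing the chart with the representable morphism $[\Spec(A)/H] \to [\Spec(A)/G_x]$, which is smooth (resp. étale) precisely because $G_x/H$ is. So assume $H = G_x$. Since $k$ is algebraically closed and $x$ is a $k$-point, the residual gerbe of $\mathcal{X}$ at $x$ is $BG_x$. The goal becomes: realize the formal completion $\widehat{\mathcal{X}}_x$ of $\mathcal{X}$ along this gerbe as the formal completion at the origin of a quotient stack $[\Spec(A)/G_x]$, where $A$ is a complete local Noetherian $k$-algebra with a $G_x$-action and $w$ is the closed fixed point; then "un-complete" and "un-formalize" this using Tannaka duality and Artin approximation.

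\emph{Step 1 (formal slice).} Using that $G_x$ is linearly reductive, the cohomology of coherent sheaves on $BG_x$, and more precisely the $\mathrm{Ext}$-groups governing deformations over Artinian bases, are controlled by the exactness of the invariants functor $V \mapsto V^{G_x}$. I would run deformation theory of the closed substack $BG_x \hookrightarrow \mathcal{X}$ to produce a compatible tower of square-zero thickenings $\mathcal{X}_n$ of $BG_x$, together with closed immersions $\mathcal{X}_n \hookrightarrow \mathcal{X}$ identifying $\widehat{\mathcal{X}}_x = \varinjlim_n \mathcal{X}_n$, each $\mathcal{X}_n$ of the form $[\Spec(A_n)/G_x]$ with $A_n$ Artinian local; linear reductivity is what allows one to lift $G_x$-equivariantly at each stage and keep the quotient presentation. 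The delicate point is that $A := \varprojlim_n A_n$ is Noetherian (and complete local), so that $\mathcal{X}_0 := [\Spec(A)/G_x]$ is a Noetherian stack whose completion at the origin is $\widehat{\mathcal{X}}_x$; this rests on a Cohen-type structure statement for complete local Noetherian stacks with linearly reductive stabilizer, equivalently the realization of $A$ as a completed ring of invariants of a miniversal deformation algebra.

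\emph{Step 2 (algebraization).} The heart of the argument is \emph{coherent completeness}: for $A$ complete local Noetherian and $G_x$ linearly reductive, the natural functor $\mathrm{Coh}([\Spec(A)/G_x]) \to \varprojlim_n \mathrm{Coh}(\mathcal{X}_n)$ is an equivalence. Again this uses linear reductivity — invariants commute with the relevant limits and higher cohomology of coherent sheaves on $BG_x$ vanishes — so that a theorem-on-formal-functions/Grothendieck-existence argument applies $G_x$-equivariantly. Granting this, the compatible system of maps $\mathcal{X}_n \to \mathcal{X}$ is a symmetric monoidal functor $\mathrm{Coh}(\mathcal{X}) \to \varprojlim_n \mathrm{Coh}(\mathcal{X}_n) \simeq \mathrm{Coh}(\mathcal{X}_0)$, hence by coherent Tannaka duality (valid since $\mathcal{X}$ is quasi-separated, locally of finite type, with affine stabilizers) it is induced by a unique morphism $f_0 \colon (\mathcal{X}_0, x_0) \to (\mathcal{X}, x)$. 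By construction $f_0$ is an isomorphism on completions at $x_0$, hence formally étale there, and it induces the identity on residual gerbes. Finally I would apply the equivariant form of Artin's approximation theorem (available because $\mathcal{X}$ is locally of finite type over $k$) to replace the complete local $A$ by a finite-type $G_x$-algebra and $f_0$ by an honest étale (resp. smooth) morphism $f$ on an open neighborhood of $w$, preserving the fixed point; the identity $BH \cong f^{-1}(BG_x)$ is then read off from the control of residual gerbes.

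I expect the main obstacles to be precisely the two global inputs of Step 1–2: the Noetherianness of the formal model $A$ and the coherent completeness of $[\Spec(A)/G_x]$, since everything downstream is a formal consequence of coherent Tannaka duality and Artin approximation. As noted, the general case $H \subset G_x$ then follows by composing with $[\Spec(A)/H] \to [\Spec(A)/G_x]$ and renaming, and when $\mathcal{X}$ has affine diagonal one upgrades $f$ to an affine morphism by observing that a representable morphism from a quotient stack $[\Spec(A)/H]$, with $H$ linearly reductive, to a stack with affine diagonal is affine after shrinking $\Spec(A)$ to a smaller $H$-invariant affine open (cohomological affineness of the source relative to the target).
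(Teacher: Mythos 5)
First, a caveat: the paper does not prove Theorem \ref{theorem1.1} at all --- it is quoted verbatim from Alper--Hall--Rydh \cite{ahr2} as motivation for the analytic Question \ref{conjecture1.2}, so there is no in-paper proof to compare against. Measured against the actual argument of \cite{ahr2}, your outline captures the correct architecture: a formal miniversal deformation built by equivariant deformation theory, Noetherianity of the limit ring $A$, coherent completeness of $[\Spec(A)/H]$ along $BH$ (their Theorem 1.3, and indeed the heart of the matter, since $[\Spec(A)/H]$ is far from proper over the completion centre), coherent Tannaka duality to effectivize the formal family, and an equivariant Artin algebraization to descend to finite type. You also correctly isolate the two genuinely hard inputs.

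There is, however, one genuine gap: your reduction of the general case to $H=G_x$ is backwards and does not work. The theorem assumes only that the subgroup $H$ is linearly reductive; $G_x$ itself need not be --- this is exactly the situation the extra generality is designed for (positive characteristic, or non-reductive stabilizers containing a linearly reductive subgroup with smooth quotient). Your Steps 1--2 require linear reductivity of the group acting: it is what makes the invariants functor exact, kills the higher cohomology controlling deformations and obstructions, and yields coherent completeness. So you cannot first produce a chart $[\Spec(A)/G_x]\rightarrow\mathcal{X}$ and then pass to $[\Spec(A)/H]$; the first step is unavailable when $G_x$ is not linearly reductive. In \cite{ahr2} the argument is instead run directly with the composite $BH\rightarrow BG_x\hookrightarrow\mathcal{X}$, which is smooth (resp.\ étale) but in general neither representable nor a closed immersion, and one deforms this morphism rather than the closed substack $BG_x$; linear reductivity of $H$ alone then suffices throughout. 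With that correction (and granting the two hard inputs you isolate, which are substantial theorems of \cite{ahr2} rather than formal consequences), your outline is a faithful sketch of the known proof.
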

The beauty of this result lies in that it permits to reduce the étale-local study of such algebraic stacks to the study of quotient stacks of the form $[\Spec{A}/G]$ with $G$ linearly reductive. Fortunately, the latter is greatly-comprehended for example by means of the geometric invariant theory.  This undoubtedly contributes enormously to  a deeper understanding of complicated algebraic moduli problems among which we can mention the recent work of E. Arbarello and G. Saccà, dealing with the local structure of Bridgeland moduli spaces (cf. \cite{AS23}).  

Although, having been existing in parallel to algebraic stacks since its creation (it is worth noticing that Grothendieck's foundational work  in \cite{grothendieck-techniques} was first motivated by the comprehension of the Teichmüller space of isomorphism classes of marked closed Riemann surfaces of a given genus - an analytic object), the literature on analytic stacks is surprisingly scarce and often appeared in an extremely general form following Grothendieck's formalism for example in \cite{bn}, \cite{toe-higher} and recently  \cite{Ino19},  \cite{meersseman-the}. 
Here, by analytic stacks, we mean those defined over the site of complex analytic spaces with smooth analytic atlases (cf. Definition \ref{definition2.6}) and the site is endowed with the usual Euclidean topology. Therefore, a deeper study of such a theory is of great demand. Once having achieved this, we shall start to be able to translate results from the algebraic side to the analytic side and vice versa. For example, an appropriate analytic counterpart of Theorem \ref{theorem1.1} is also expected to hold. In fact, through e-mail correspondence, D. Rydh suggested the following analog.

\begin{question}\label{conjecture1.2} If $\mathcal{X}$ is an analytic stack with appropriate assumptions. Then, near a $\mathbb{C}$-point with linearly reductive stabilizer, $\mathcal{X}$ is étale-locally equivalent to a quotient stack?
\end{question} In reality, there are many analytic moduli spaces whose analytic stacky version enjoys the mentioned property e.g. moduli space of polarized manifolds admitting a constant scalar curvature Kähler metric \cite{DN18}, moduli space of Fano manifolds with Kähler-Ricci solitions \cite{Ino19}, moduli space of semistable Higgs bundle on a closed Riemann surface \cite{Fa22}, moduli space of polystale holomorphic vector bundles on a compact complex Kahl\"{e}r manifold \cite{BS21}, in which the main ingredient of the proof is the construction of local Kuranishi slices with respect to some reductive group actions and then one tries to see if these local models could be patched together. Last but not least, the definition of the analytic moduli space of degree $d$ log Del Pezzo surfaces \cite[Definition 3.13  and Definition 3.14]{OSS16} is more or less given with the affirmation of Question \ref{conjecture1.2} as the main defining property. As a matter of fact,  if the answer to Question \ref{conjecture1.2} was positive, it would definitely help to better understand complicated analytic moduli problems at least locally. 

The objective of this note is to give some more supporting evidence to Question \ref{conjecture1.2}. In other words, apart from Deligne-Mumford analytic stack (Theorem \ref{dms}) and good quotient stacks (Theorem \ref{quost}), we test it in classical moduli problems: the Teichmüller  space and the Riemann moduli space of integrable complex structures. 
As might be expected, due to the analytic nature, a proof for the positivity of Question \ref{conjecture1.2} if existed could be different from the proof of Theorem \ref{theorem1.1} at least concerning tools used. In the sequel, this observation will be clear in the verification of the four aforementioned cases.  Namely, let $M$ be a  connected compact oriented differentiable manifold of even dimension. Let $\mathscr{T}(M)$( resp. $\mathscr{M}(M)$) be the Teichmüller stack (resp. Riemann moduli) stack of integrable complex structures on $M$, both defined over the site of complex analytic spaces (cf. Section \ref{s4.1}). Then according to a delicate work of L. Meersseman \cite{meersseman-the}, $\mathscr{T}(M)$ and $\mathscr{M}(M)$ are analytic stacks whose étale local structure near points with reasonable stabilizer can be further described by our main result (see Theorem \ref{theorem5.1} and Theorem \ref{theorem5.2} below).
\begin{theorem} \label{theorem3} The answer to Question \ref{conjecture1.2} is yes for $\mathscr{T}(M)$ and $\mathscr{M}(M)$.
\end{theorem}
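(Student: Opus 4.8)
The plan is to deduce the statement from the two local structure theorems, Theorem~\ref{theorem5.1} (for $\mathscr{T}(M)$) and Theorem~\ref{theorem5.2} (for $\mathscr{M}(M)$), whose proofs rest on the Kuranishi slice. I would fix an integrable complex structure $J_0$ on $M$ representing the prescribed $\CC$-point and let $G$ be its stabilizer — the full biholomorphism group $\Aut(M,J_0)$ in the case of $\mathscr{M}(M)$, and the subgroup of biholomorphisms lying in $\mathrm{Diff}^0(M)$ in the case of $\mathscr{T}(M)$ — which in both cases is a complex Lie group, linearly reductive by hypothesis. Next I would invoke Kuranishi's theorem to produce a finite-dimensional analytic germ $(K,0)\hookrightarrow H^1(M,\Theta_{J_0})$ carrying a holomorphic family $\{J_k\}_{k\in K}$ of integrable complex structures on $M$ with central fibre $J_0$, complete at every nearby point, together with a holomorphic $G$-action fixing $0$, linearizing to the natural $G$-representation on $H^1(M,\Theta_{J_0})$, and satisfying $\{\varphi^{*}J_k\}\cong\{J_k\}$ as $K$-families for $\varphi\in G$; the existence and linearizability of this action is the point at which reductivity of $G$ enters.

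First I would build the comparison morphism. Since a holomorphic family of integrable complex structures on $M$ over a base $S$ represents a morphism $S\to\mathscr{T}(M)$ (resp.\ $S\to\mathscr{M}(M)$), the Kuranishi family defines $K\to\mathscr{T}(M)$ (resp.\ $\mathscr{M}(M)$) with $0\mapsto[J_0]$, and the $G$-equivariance above promotes it to a morphism of analytic stacks $f\colon[K/G]\to\mathscr{T}(M)$ (resp.\ $\mathscr{M}(M)$), which carries $BG$ isomorphically onto the residual gerbe at $[J_0]$ and induces the identity on stabilizers at $0$. Then I would show $f$ is étale: completeness of the Kuranishi family gives surjectivity of the relevant Kodaira--Spencer maps on suitable smooth atlases, hence smoothness of $f$, and since the fibre of $f$ over $[J_0]$ equals $BG$ the relative dimension is $0$, so $f$ is étale. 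The non-formal ingredient here is the analytic slice comparison — after shrinking $K$, two points $k_1,k_2\in K$ carry biholomorphic (resp.\ $\mathrm{Diff}^0$-equivalent) complex structures exactly when they lie in one $G$-orbit, with $\mathrm{Stab}_G(k)=\Aut(M,J_k)$ (resp.\ its isotopic-to-identity part) — which follows from the ellipticity of $\bar\partial$ together with upper semicontinuity of $\dim\Aut(M,J_k)$, and which is the delicate point already confronted in Meersseman's work. Passing to a Stein representative of $K$, the quadruple $(K,0,G,f)$ is precisely the data requested in Question~\ref{conjecture1.2}, which gives the theorem for both stacks.

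The hard part will be the étaleness of $f$, equivalently the analytic slice comparison in the second step: in contrast with the algebraic situation of Theorem~\ref{theorem1.1} there is neither Artin approximation nor a Luna étale slice theorem to lean on, so the required transversality must be extracted from elliptic operator theory while one keeps control of how the automorphism groups of nearby complex structures can jump. A secondary subtlety is the bookkeeping distinguishing the Teichm\"uller case ($\mathrm{Diff}^0$) from the Riemann case ($\mathrm{Diff}^+$) when identifying the correct stabilizer $G$; since a finite extension of a linearly reductive group is again linearly reductive, this refinement does not disturb the hypothesis of Question~\ref{conjecture1.2}.
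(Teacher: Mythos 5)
Your overall architecture (equivariant Kuranishi family for a reductive stabilizer, pass to $[K/G]$, map it to the moduli stack, check \'etaleness) is the same as the paper's, but the step you yourself flag as ``the hard part'' is exactly where the argument is missing, and the way you state it is not even true in general. You claim that, after shrinking $K$, two points of $K$ carry equivalent complex structures \emph{exactly when} they lie in one $G$-orbit, and that this follows from ellipticity of $\bar\partial$ plus upper semicontinuity of $\dim\Aut(M,J_k)$. That statement would make $[K/G]\to\mathfrak{X}$ a monomorphism, hence an open immersion, which fails: distinct points of the Kuranishi space can be identified by ``large'' diffeomorphisms not captured by the local $G$-action (this is precisely the exceptional phenomenon Meersseman isolates; the comparison map is only \'etale, and is an isomorphism only at non-exceptional K\"ahlerian points, cf.\ Corollary \ref{coro6.1}(ii)). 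The paper does not derive the slice comparison from elliptic theory at all. Instead it (a) quotes \cite[Theorem 6.2]{meersseman-kuranishi} for the \'etale representable morphism from the Kuranishi stack $\mathscr{A}_k$ to $\mathscr{T}(M)$, and (b) proves the genuinely new point, namely that $\mathscr{A}_1\cong[K_0/\Aut^1(X_{J_0})]$, by an explicit groupoid argument: every $(V_0,\mathcal{D}_1)$-admissible chain of diffeomorphisms is untwisted, using the product decomposition of Lemma \ref{lemma3.3}, into an element of $\Aut^1(X_{J_0})$, so the admissibility groupoid $\mathcal{A}_1\rightrightarrows K_0$ is equivalent to the translation groupoid. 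Without either Meersseman's theorem or a substitute for this untwisting, your proof of \'etaleness is a restatement of the goal.

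A second, independent problem is your choice of $G$ for $\mathscr{M}(M)$. You take the full stabilizer $\Aut(X_{J_0})$ and remark that a finite extension of a linearly reductive group is linearly reductive; but the component group of $\Aut(X_{J_0})$ is its image in the mapping class group $\mathrm{Diff}^+(M)/\mathrm{Diff}^0(M)$ and need not be finite, and the local groupoid built from Lemma \ref{lemma3.3} only sees automorphisms in $\mathrm{Diff}^0(M)$. The paper's Theorem \ref{theorem5.2} accordingly assumes only that the identity component $G_{J_0}^0=\Aut^0(X_{J_0})$ is linearly reductive, produces $[K_0/\Aut^0(X_{J_0})]\to\mathscr{T}(M)$, and then composes with the natural map $\mathscr{T}(M)\to\mathscr{M}(M)$, which is \'etale because its fibers are the discrete set $\mathrm{Diff}^+(M)/\mathrm{Diff}^0(M)$ --- in the spirit of the ``subgroup $H\subset G_x$ with $G_x/H$ \'etale'' clause of Theorem \ref{theorem1.1}. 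You should restructure the $\mathscr{M}(M)$ case along these lines rather than asserting an isomorphism onto the residual gerbe at $[J_0]$.
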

The principal idea of our proof is to make use of our previous work on an equivariant version of Kuranishi theory on deformations of integrable complex structures (cf. \cite{doan-equivariant}). Namely, according to the foundation work of Kodaira-Spencer-Kuranishi \cite{kodaira-complex,kuranishi-deformation}, given any integrable complex structure, there exists an analytic space, often called the Kuranishi space $S$, parametrizing all the nearby complex structures. In addition, the associated family is semi-universal in the sense that any other family is defined by pullback by this family by a holomorphic map whose differential is unique. Furthermore,  if the stabilizer $G$ of the complex structure under consideration is linearly reductive, then there exists a $G$-action on the Kuranishi space compatible with the associated family (cf. \cite{doan-equivariant}).  The rest of proof is reduced to the construction of an étale morphism from the corresponding quotient stack $[S/G]$ to $\mathscr{T}(M)$ and $\mathscr{M}(M)$. This final step is done via a careful reconsideration of Meersseman's construction of the smooth analytic groupoids associated to $\mathscr{T}(M)$ and $\mathscr{M}(M)$. It turns out that the quotient stack $[S/G]$ is isomorphic to the so-called Kuranishi stack from which natural inclusions into $\mathscr{T}(M)$ and $\mathscr{M}(M)$ can be shown to be étale by direct inspection. Finally, it is really worth emphasizing that Meersseman's construction \cite{meersseman-the} is crucial in determining the local structures of the analytic stacks under consideration in particular and of any kinds of analytic stacks constructed in his way in general (cf. Section \ref{s5} for a discussion).
\begin{remark}
Note that the answer to Question \ref{conjecture1.2} is affirmative when the stack is defined over the site of differentiable and its stabilizers are compact (cf. \cite[Theorem 2.4]{Zun06}) or over the site of complex manifolds (resp. topological spaces) and its stabilizers are finite - Deligne-Mumford analytic stack (cf. \cite{Noo05}, resp. \cite[Proposition 3.5]{bn} and Theorem \ref{dms} below as well).\end{remark}

Let us now outline the organization of the note. Section \ref{s2} is a quick introduction on analytic stacks in which we also clarify the equivalence between two existing notions of analytic stacks. Section  \ref{quotstr} investigates local structures of two familiar classes of analytic stacks where the answer to Question \ref{conjecture1.2} is positive: Deligne-Mumford stacks and good quotient stacks. In Section \ref{s3.3} and \ref{s4.1}, we recall definitions of the Teichmüller, of  the Riemann and of the Kuranishi object as classical spaces and also as analytic stacks. Section  \ref{s5} is dedicated to a proof of the étale-local structure theorem and some of its immediate applications which seems to be non-trivial, at first sight. This note is ended by some remarks on Question \ref{conjecture1.2} in Section \ref{s4.4}.

\subsection*{Acknowledgements} The author would like to profoundly thank D. Rydh for introducing this interesting problem to him and for sharing enthusiastically his insight on it as well as L. Meersseman for many useful discussions, suggestions and continuous encouragement without which this note would not be accomplished. Moreover, during the revision of this paper, the latter also shared with the author the latest version of his paper \cite{meersseman-kuranishi} which helped to clarify some ambiguities. Finally, the authors are grateful to the anonymous referee whose dedicated work improved the quality of the paper. 

The work is supported by the grant G0B3123N of N. Budur from FWO and the Centre de Math\'{e}matiques Laurent Schwartz, Palaiseau, France.
\section{A glimpse on analytic stacks} \label{s2}

   
Let $(Ana)$ be the category of finite-dimensional $\mathbb{C}$-analytic spaces (including everywhere non-reduced analytic spaces as well). We put a Grothendieck topology on $(Ana)$ by declaring that for each object $U\in (Ana)$, $\mathrm{Cov}(U)$ is the set of collections $\lbrace U_i \rightarrow U \rbrace_{i\in I}$ of morphisms for which each morphism is a standard topological open covering of $U$ and the map 
$$\prod_{i\in I} U_i \rightarrow U$$ is surjective. This is often called the \emph{Euclidean} site.

\begin{definition} A category fibered in groupoids $p: F \rightarrow (Ana)$ is a stack if the following two conditions hold:
\begin{enumerate}
\item[(i)] For any $X \in (Ana)$ and object $x,y \in F(X)$, the presheaf $\mathrm{Isom}(x,y)$ on $(Ana/X)$ is a sheaf.
\item[(ii)] For any covering $\lbrace X_i \rightarrow X \rbrace $ of an object $X \in (Ana)$, any descent data with respect to this covering is effective.
\end{enumerate}
\end{definition}

\begin{remark} Any category fibered in groupoids $F$, we can associate a stack $F^a$ enjoying the expected universal property among morphims of categories fibered in groupoids from $F$ to stacks. This process is called \emph{stackification}.
\end{remark}

\begin{definition} A morphism of stacks $f: \stX \rightarrow \stY$ is \emph{representable} if for any analytic space $U$ and morphism $y:U \rightarrow \stY$, the fiber product $\stX \times_{\stY,y}U$ is an analytic space. If we further let $P$ be a property of morphisms of analytic spaces such as surjectiveness, flatness, smoothness, étaleness, then $f$ is said to have $P$ if for any analytic space $U$ and any morphism of analytic spaces $y:U \rightarrow \stY$, the morphism $\stX\times_{\stY,y}U \rightarrow U$ of analytic spaces has $P$.

\end{definition}

\begin{definition} \label{definition2.6}  A stack $\stX$ is an \emph{analytic stack} if the following hold:
\begin{enumerate}
\item[(i)] The diagonal morphism
$$\Delta: \stX \rightarrow \stX \times \stX$$ is representable.
\item[(ii)] There exists a smooth surjective morphism $\pi: X \rightarrow\stX$ with $X$ an analytic space.
A morphism of analytic stacks $f: \stX \rightarrow \stY$ is a morphism of stacks. 
\end{enumerate}
Finally, an analytic stack $\stX$ is said to be \emph{Deligne-Mumford} if there exists further an étale surjective morphism from an analytic space to $\stX$.
\end{definition}
\begin{remark} Recall that a \emph{smooth} morphism of analytic spaces locally looks like a projection. A morphism of analytic spaces is \emph{finite} if it is proper and quasi-finite (or equivalently, proper with finite fibers). A morphism of analytic spaces is \emph{étale} if it is smooth and quasi-finite (cf. \cite[Definition 3.20 and Definition 3.22]{Ans23}). The latter is equivalent to a local isomorphism (\cite[Lemma 3.23]{Ans23}). Therefore, complex-analytically, étale morphisms mean precisely local isomorphisms whose fibers are not necessarily finite.
\end{remark}
\begin{remark} As in the algebraic setting, in Definition \ref{definition2.6}, Condition $(\mathrm{i})$ implies in particular that the morphism $\pi: X \rightarrow \stX$ in Condition $(\mathrm{ii})$. Conversely, we can show that the representability of $\pi: X \rightarrow \stX$ implies that of the associated diagonal morphism. Thus, it makes sense to talk about the smooth, surjective morphism $\pi: X \rightarrow \stX$.
\end{remark}
\begin{remark}\label{groupoid} Given an analytic stack $\stX$ and a surjective smooth presentation $U\rightarrow \stX$, we can form the smooth analytic groupoid $U\times_{\stX}U \rightrightarrows  U$ with source and target maps coming from the projections of the fiber product.  As in the algebraic case, it can be proved without any difficulties that the stackification of the prestack associated to this groupoid is isomorphic to $\stX$.
\end{remark}

In \cite{meersseman-the}, for technical reasons, the author defined the notion of analytic stacks in the inverse direction. Namely,  a stack is analytic if it is isomorphic to the stackification of the prestack associated to some smooth analytic groupoid.  It turns out that this definition and Definition \ref{definition2.6} are also equivalent in the analytic setting.  First of all, it would be nice to recall the following useful criterion which follows immediately from Definition \ref{definition2.6} and an obvious cartesian diagram.
\begin{lemma}  \label{lemma2.8} Let $\stX $ be a stack. The diagonal map $\Delta: \stX \rightarrow \stX \times \stX$ is representable if and only if for every analytic space $U$ and two objects $u_1, u_2 \in \stX(U)$, the sheaf $\isom(u_1,u_2)$ on $(Ana/U)$ is an analytic space.  Here, $(Ana/U)$ means the category of analytic spaces over $U$. \end{lemma}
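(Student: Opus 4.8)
The plan is to realize the $\isom$-sheaf as a $2$-fibre product over $\stX\times\stX$, so that representability of $\Delta$ becomes, after unwinding the definition of the fibre product, literally the asserted pointwise condition. The one preliminary remark is that giving two objects $u_1,u_2\in\stX(U)$ is the same as giving a single morphism $(u_1,u_2)\colon U\to\stX\times\stX$, and that, conversely, by the construction of the product stack, every morphism $y\colon U\to\stX\times\stX$ is of this form for an essentially unique pair $u_1,u_2\in\stX(U)$.

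The heart of the matter is the cartesian square
\[
\begin{tikzcd}
\cG \arrow[r] \arrow[d] & \stX \arrow[d, "\Delta"] \\
U \arrow[r, "{(u_1,u_2)}"'] & \stX\times\stX
\end{tikzcd}
\]
together with a canonical identification of the $2$-fibre product $\cG:=\stX\times_{\stX\times\stX,(u_1,u_2)}U$ with $\isom(u_1,u_2)$ as objects of $(Ana/U)$. To check this I would compare $T$-points over an object $t\colon T\to U$ of $(Ana/U)$: a $T$-point of $\cG$ is a datum $(x\in\stX(T),\,t,\,(\alpha_1,\alpha_2))$ with $\alpha_i\colon x\xrightarrow{\ \sim\ }t^{*}u_i$, and $(x,t,\alpha_1,\alpha_2)\mapsto\alpha_2\circ\alpha_1^{-1}\in\isom(u_1,u_2)(T\xrightarrow{t}U)$ defines an equivalence of categories fibred in groupoids over $(Ana/U)$, with quasi-inverse $\varphi\mapsto(t^{*}u_1,t,\mathrm{id},\varphi)$; both sides are sheaves by axiom (i) of a stack. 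This is the only computation involved, and it is routine $2$-categorical bookkeeping — the single mild point to watch is that a $2$-fibre product is defined only up to equivalence, so that ``$\isom(u_1,u_2)$ is an analytic space'' must be read as ``is representable by an object of $(Ana/U)$''.

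With this in hand both implications are immediate. If $\Delta$ is representable, then for every analytic space $U$ and all $u_1,u_2\in\stX(U)$ the fibre product $\cG$ is an analytic space over $U$, hence so is $\isom(u_1,u_2)$. Conversely, assume every such $\isom(u_1,u_2)$ is an analytic space; given an arbitrary analytic space $U$ and an arbitrary morphism $y\colon U\to\stX\times\stX$, write $y=(u_1,u_2)$ by the preliminary remark, so that $\stX\times_{\stX\times\stX,y}U\cong\isom(u_1,u_2)$ is an analytic space, which is exactly the representability of $\Delta$. I do not expect any real obstacle here: the statement is formal, the entire content being in the cartesian square above.
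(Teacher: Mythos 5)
Your proposal is correct and is exactly the argument the paper has in mind: the paper dispenses with a written proof, saying only that the lemma ``follows immediately from Definition \ref{definition2.6} and an obvious cartesian diagram,'' and that diagram is precisely your cartesian square identifying $\stX\times_{\stX\times\stX,(u_1,u_2)}U$ with $\isom(u_1,u_2)$. Your write-up simply makes explicit the $T$-point comparison and the correspondence between morphisms $U\to\stX\times\stX$ and pairs of objects, which the paper leaves implicit.
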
 

Given a smooth analytic groupoid $(G_1\rightrightarrows G_0)$, we use the standard notion $[G_1/G_0]$ to denote the stackification of its associated prestack. 
\begin{proposition} \label{proposition2.9} Any stack isomorphic to the stackification of the prestack associated to some smooth analytic groupoid is analytic in the sense of Definition \ref{definition2.6}.
\end{proposition}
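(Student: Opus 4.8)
The plan is to exhibit the canonical smooth atlas coming from the groupoid and then to verify the two conditions of Definition \ref{definition2.6}, each of which reduces to a fibre-product computation over $G_0$ followed by a gluing argument. Write $\stX=[G_1/G_0]$ for the stackification of the prestack of $(G_1\rightrightarrows G_0)$, with smooth structure maps $s,t\colon G_1\to G_0$, identity section $e$, inversion $\iota$ and composition $m$. The tautological object $\id\colon G_0\to G_0$ of the prestack over $G_0$ induces a morphism $p\colon G_0\to\stX$, and by construction of the prestack one has a canonical identification $G_0\times_{\stX}G_0\cong G_1$ taking the two projections to $s$ and $t$; this survives stackification because the relevant $\isom$-presheaf is representable, hence already a sheaf on the Euclidean site.

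First I would check that $p$ is representable, smooth and surjective, which is condition (ii). Let $\tau\colon T\to\stX$ be a morphism from an analytic space. Objects of a stackification are, by definition, locally (in the Euclidean topology) in the image of the prestack, so there is an open covering $\{U_i\to T\}$ with lifts $f_i\colon U_i\to G_0$ of $\tau|_{U_i}$ along $p$. Applying the cancellation law for fibre products to $G_0\times_{\stX}G_0\cong G_1$ gives a canonical isomorphism
\[
G_0\times_{\stX,\tau}U_i\;\cong\;G_1\times_{s,G_0,f_i}U_i ,
\]
under which the projection to $U_i$ becomes the base change of $s\colon G_1\to G_0$ along $f_i$; the right-hand side is an analytic space since $(Ana)$ has fibre products. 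On an overlap $U_{ij}=U_i\cap U_j$ the two lifts $f_i,f_j$ of $\tau|_{U_{ij}}$ are related by a section $\gamma_{ij}\colon U_{ij}\to G_1$, and the $\gamma_{ij}$ satisfy the cocycle relation on triple overlaps because they come from the descent datum of $\tau$; composing with $m$ and $\iota$ they yield gluing data matching the local models above, so the analytic spaces $G_1\times_{s,G_0,f_i}U_i$ glue to an analytic space representing $G_0\times_{\stX,\tau}T$. Hence $p$ is representable. Since locally on $T$ it is the base change of the smooth morphism $s$, it is smooth; and since $s\circ e=\id_{G_0}$ shows $s$ is surjective and surjectivity of morphisms of analytic spaces is stable under base change, $p$ is surjective.

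Next I would verify condition (i), representability of $\Delta\colon\stX\to\stX\times\stX$. By Lemma \ref{lemma2.8} it suffices to show that for every analytic space $U$ and $u_1,u_2\in\stX(U)$ the sheaf $\isom(u_1,u_2)$ on $(Ana/U)$ is an analytic space. As before, choose an open covering $\{U_i\}$ of $U$ over which $u_1,u_2$ lift to $g_i,h_i\colon U_i\to G_0$; then one has a canonical isomorphism
\[
\isom(u_1,u_2)\big|_{U_i}\;\cong\;U_i\times_{(g_i,h_i),\,G_0\times G_0,\,(s,t)}G_1 ,
\]
whose right-hand side is an analytic space, and these local models glue (again by sections into $G_1$ and the cocycle relation, via $m$ and $\iota$) to an analytic space representing $\isom(u_1,u_2)$. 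Thus $\Delta$ is representable. Alternatively, once (ii) and the representability of $p$ are in hand, one may invoke the fact noted after Definition \ref{definition2.6} that representability of the presentation $p$ forces the diagonal to be representable. Either way $\stX$ satisfies (i) and (ii), hence is analytic.

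The step I expect to be the main obstacle is the two gluing arguments. The local identifications of $G_0\times_{\stX}U_i$ and of $\isom(u_1,u_2)|_{U_i}$ with honest analytic fibre products are essentially formal, but assembling them into a single analytic space over $T$ (resp.\ $U$) requires verifying that the transition data built from sections into $G_1$ genuinely satisfy the cocycle condition — which is exactly where the associativity of $m$ and the compatibilities among $e$, $\iota$, $s$, $t$, $m$ intervene — and then invoking the standard gluing of analytic spaces along an open covering.
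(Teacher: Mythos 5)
Your proposal is correct and follows essentially the same route as the paper: both reduce representability of the diagonal, via Lemma \ref{lemma2.8}, to the local identification of $\isom(u_1,u_2)$ with the fibre product of $(s,t)\colon G_1\to G_0\times G_0$ along local lifts to the prestack, and then glue; the paper simply asserts the smooth surjective presentation $G_0\to\stX$ as easy to check, whereas you spell out the lifting and gluing details. No gap.
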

\begin{proof} The proof is standard. We recall here for completeness. Let $\stX$ be a stack. Suppose that $\stX=[G_0/G_1] $  for some smooth analytic groupoid $(G_1\rightrightarrows G_0)$.  

For the sake of Lemma \ref{lemma2.8}, it suffices to check that for any analytic space $U$ and any objects $u,v\in \stX(U)$, the sheaf $\isom(u,v)$ on $(Ana/U)$ is an analytic space. Indeed,  without loss of generality, we can suppose that there exists a sufficiently small covering of $\lbrace U_i \rightarrow U \rbrace_{i\in I}$ of $U$ such that the $\stX_{\mid U_i}$ is  the restriction of the prestack $G$ associated to $(G_1\rightrightarrows G_0)$ on $U_i$ by the universal property of the stackification. This gives rise to the following cartesian diagram
$$\xymatrix{
\isom(u,v)_{\mid U_i}  \ar[d] \ar[r] &G_1\ar[d]^-{(s,t)} \\
 U_i \ar[r]^-{u_{\mid U_i},v_{\mid U_i}}  			& G_0\times G_0 
}$$
It means exactly that locally $\isom(u,v)$ is a fiber product of analytic spaces, hence an analytic space itself. Therefore, globally it is also an analytic space. 

Finally, it is easy to check that a surjective smooth presentation of $\stX$ is given by the natural map $G_0 \rightarrow \stX=[G_0/G_1]$.
\end{proof}Due to this proposition, from now we can interchangeably use these two definitions of analytic stacks. Moreover, Definition \ref{definition2.6} allows us to freely make use of classical results of the general theory of stacks over a site and in particular, results on algebraic stacks whose adaptations to the analytic case are almost similar.
\section{Quotient structures of analytic stacks} \label{quotstr}
\subsection{Deligne-Mumford analytic stacks} As an opening example, we show that Conjecture \ref{conjecture1.2} is true for a particular class of analytic stacks with finite stabilizer.
\begin{theorem}\label{dms}
Let $\pi: X \rightarrow \stX$ be a Deligne-Mumford analytic stack whose diagonal morphism $\Delta: \stX \rightarrow \stX \times \stX$ is closed with finite fibers. For any point $x\in \stX$, there exists an étale morphism: $$[S/G_x]\rightarrow (\stX,x)$$ where $S$ is an analytic space and $G_x$ is the stabilizer of $x$.
\end{theorem}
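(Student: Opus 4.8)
The strategy is to exhibit the required étale map by restricting an étale atlas of $\mathcal{X}$ to a suitable $G_x$-invariant open neighbourhood of a lift of $x$, working with the groupoid presentation of $\mathcal{X}$. Since $\mathcal{X}$ is Deligne--Mumford, fix an étale surjective atlas $\pi\colon X\to\mathcal{X}$ and form the associated étale analytic groupoid $R:=X\times_{\mathcal{X}}X\rightrightarrows X$ with source and target $s,t$; by Remark \ref{groupoid} one has $\mathcal{X}\cong[X/R]$. Pulling $\Delta$ back along the étale surjection $\pi\times\pi\colon X\times X\to\mathcal{X}\times\mathcal{X}$ shows that $(s,t)\colon R\to X\times X$ is again closed with finite fibres, hence finite. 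Choosing a lift $\tilde{x}\in X$ of $x$, the fibre $G_x=(s,t)^{-1}(\tilde{x},\tilde{x})$ is then a finite set of reduced points which, with the groupoid law, is the finite group $\Aut(x)=G_x$.

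Next I would produce a local model for the groupoid near $\tilde{x}$. Write $G_x=\{g_1=e(\tilde{x}),g_2,\dots,g_n\}\subset R$. Because $s$ and $t$ are local isomorphisms while $(s,t)$ is finite, one can choose pairwise disjoint open neighbourhoods $W_i\ni g_i$ and an open $V\ni(\tilde{x},\tilde{x})$ with $(s,t)^{-1}(V)=\bigsqcup_i W_i$ and such that each $W_i$ is the graph of a local automorphism $\phi_i:=t\circ(s|_{W_i})^{-1}$ of $X$ fixing $\tilde{x}$, with $\phi_1=\mathrm{id}$ near $\tilde{x}$. The germs of the $\phi_i$ at $\tilde{x}$ realise $G_x$ as a finite group of germs of automorphisms of $(X,\tilde{x})$, and since $G_x$ is finite a routine shrinking yields an open neighbourhood $S\ni\tilde{x}$ on which the $\phi_i$ are everywhere defined, obey the group law exactly, and preserve $S$, thereby defining a genuine $G_x$-action on $S$; shrinking once more I may also arrange $S\times S\subseteq V$. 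This last inclusion forces the restricted groupoid $R|_S:=s^{-1}(S)\cap t^{-1}(S)\rightrightarrows S$ to contain no arrows beyond those coming from the $\phi_i$, so that $R|_S\cong G_x\ltimes S$. I expect this no-extra-arrows step to be the main obstacle; it is precisely where the hypothesis that $\Delta$ be closed with finite fibres (hence $(s,t)$ finite, in particular proper and closed) is used, and also the point at which an elementary analytic argument replaces the algebraic slice theorems.

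It remains to descend and verify étaleness. The composite $S\hookrightarrow X\xrightarrow{\pi}\mathcal{X}$ is étale, being an open immersion followed by an étale atlas, and via the identification $R|_S\cong G_x\ltimes S$ it is $G_x$-equivariant with descent datum inherited from the tautological one of $\pi$ over $R$; hence it descends to a morphism $f\colon[S/G_x]\to\mathcal{X}$. Since $S\to[S/G_x]$ is an étale surjective atlas, $S\hookrightarrow X$ is étale, and étaleness is étale-local on the source, $f$ is étale. Finally $\tilde{x}$ is fixed by $G_x$, so its image $w\in[S/G_x]$ satisfies $f(w)=x$ (and $f$ in fact induces the identity $G_x=\Aut(w)\to\Aut(x)=G_x$ on stabilizers), which gives the desired pointed étale morphism.
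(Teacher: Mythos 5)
Your proposal is correct and follows essentially the same route as the paper's proof: both pass to the étale groupoid $R = X\times_{\stX}X \rightrightarrows X$, use that $(s,t)$ is a closed finite pullback of $\Delta$ to separate the finite fibre $G_x$ from the rest of $R$ (your ``no-extra-arrows'' step is exactly the paper's argument with the closed set $(s,t)(A)$), and then shrink to a $G_x$-invariant neighbourhood on which the restricted groupoid becomes the translation groupoid $G_x\times S\rightrightarrows S$. The only cosmetic difference is that you phrase the local arrows as graphs of local automorphisms $\phi_i$ and invoke descent of the étale map $S\to\stX$, whereas the paper writes the action directly as $g\cdot u = t(s^{-1}(u)\cap U_g)$ and cites the equivalence of groupoids.
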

\begin{proof}
The proof is similar to that of \cite[Proposition 14.9]{Noo05} where the site of topological spaces is replaced by that of analytic spaces. We give a proof here for the sake of completeness. Consider the associated groupoid $R:=X\times_{\stX}X \rightrightarrows  X$ where the source and target maps are denoted by $s$ and $t$, respectively (see Remark \ref{groupoid}). Then we have the following cartesian diagram
$$\xymatrix{
R  \ar[d] \ar[r]^-{(s,t)} &X\times X\ar[d]^-{(\pi,\pi)} \\
 \stX \ar[r]^-{\Delta}  			& \stX \times \stX 
}$$
Pick a geometric point $\tilde x \in X$ lying over $x$. Then the stabilizer $G_x$ of $x$ can be identified with $s^{-1}(\tilde x)\cap t^{-1}(\tilde x)$. The latter is a finite group in $R$ by assumption since the morphism $(s,t)$ is a pull-back of the diagonal morphism $\Delta$. Moreover, $s$ and $t$ are also pull-backs of $\pi$ being étale so that they are étale, or equivalently local isomorphisms as well. Hence, for each $g\in G_x$, we can choose a neighborhood $\tilde U_g$ of $g$ in $R$ such that the restriction of $s$ on $\tilde U_g$ is an isomorphism of analytic spaces onto its image $s(\tilde U_g)$ which is a neighborhood of $x$. Once again the finiteness of $G_x$ implies that $\tilde U_g$'s can be arranged to be disjoint from each other. Replace now $\tilde U_g$ by $\hat U_g:=\tilde U_g \cap s^{-1}(\cap_{g\in G_x}s(\tilde U_g))$. Denote $A=R\setminus \cap_{g\in G_x}\tilde U_g $. Since $(s,t)$ being pull-backs of $\Delta$ is closed, $(s,t)(A)$ is closed in $(s,t)(R)$ and $(\tilde x,\tilde x)$ is not included in $(s,t)(A)$ by construction. It means that there exists a neighborhood $\tilde U$ of $\tilde x $ in $X$ such that $\tilde U\times \tilde U$ containing $(\tilde x, \tilde x)$ is disjoint from $(s,t)(A)$ in $(s,t)(R)$. Set $W_g:=\hat{U}_g \cap (s,t)^{-1}(\tilde U \times \tilde U)$, $U:=\cap_{g\in G_x}t(W_h)$ and $U_g:=W_g\cap t^{-1}(U)$. By construction, $U$ is an invariant neighborhood of $\tilde x$ and the restriction of the groupoid $R \rightrightarrows  X$ on $U$ is exactly $\coprod_{g\in G_x}U_g \rightrightarrows  U$ where the structure maps induced by $s$ and $t$ give rise to isomorphisms between $U_g$ and $U$ for each $g \in G_x$. 

Finally, we have an well-defined analytic action of $G_x$ on $U$, given by
$$g.u=t(s^{-1}(u)\cap U_h), \;\forall g\in G_x,u\in U$$ so that the groupoid $\coprod_{g\in G_x}U_g \rightrightarrows  U$ is equivalent to the translation groupoid $G_x \times U \rightrightarrows  U$ (cf. \cite[Lemma 7.7]{Noo05}). Note the analyticity of all the maps involved is guaranteed by that of $s$ and $t$. This finishes the verification.
\end{proof}
As in the algebraic case, this result tells us that Deligne-Mumford analytic stacks of this type can be covered by quotient stacks arising from finite group actions on analytic spaces.
\begin{example}\label{dmex}The most famous example to illustrate the philosophy mentioned in the introduction in general and Theorem \ref{dms} in particular might be the case of elliptic curve structures on the product of two $1$-spheres $M=\mathbb{S}^1\times \mathbb{S}^1 $. These are parametrized by the upper half plane $\mathbb{H}$ which is also the Teichm\"{u}ller stack $\mathscr{T}(M)$. Moreover, taking into account the action of $\mathrm{SL}(2,\mathbb{Z})$ on $\mathbb{H}$, The Riemann moduli stack is given by the quotient stack $[\mathbb{H}/\mathrm{SL}(2,\mathbb{Z})]$. Their stabilizers are all finite and étale-locally they are quotient stacks. Hence, this gives an example of Deligne-Mumford analytic stacks.
\end{example}
\subsection{Analytic quotient stacks} In the algebraic setting, the famous Luna's étale slice theorem \cite{Lun73} shows essentially that quotient stacks formed by smooth linear algebraic group actions on affine schemes of finite type over an algebraically closed field are étale-locally quotient stacks at points whose stabilizer is linearly reductive (cf. \cite[Lemma 9]{alper-kresch}). Therefore, one hopes that an analytic analogue of this should hold if we restrict ourselves to Stein spaces being the analytic counterpart of affine schemes and reductive complex Lie groups. Luckily, the latter ones can be shown to carry a compatible structure of linear algebraic group in the sense that every holomorphic finite-dimensional representation of theirs is rational (see \cite{HM59}). With this setup, we still have an analytic version of the Luna's slice theorem \cite{Sno82} due to D.M. Snow, whose stacky version might be read as follows. 

\begin{theorem} \label{quost}
Let $S$ be a finite-dimensional Stein space on which a complex reductive Lie group $G$ acts holomorphically. Suppose further either one of the following two conditions holds: given a point $x$ with closed orbit $G\cdot x$.

i) $G$ is a reductive complex Lie group.

ii) or more generally, there exists an equivariant holomorphic map $h:X \rightarrow \mathbb{C}^n$ embedding $G\cdot x$ and is a local immersion at $x$, for some $n$.

Then, if the stabilizer $G_x$ of $x$ is a reductive subgroup of $G$, there exists a $G_x$-invariant locally closed Stein subspace $x\in U\subset S$ such that the induced map $[U/G_x]\rightarrow [S/G]$ is an étale morphism of analytic stacks.
\end{theorem}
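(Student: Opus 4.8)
The plan is to derive the statement from Snow's analytic slice theorem \cite{Sno82} together with the standard identification of a quotient stack by $G$ with the quotient stack of an associated fiber bundle, in complete analogy with the algebraic proof of Luna's slice theorem \cite{Lun73} and its stacky reformulation \cite[Lemma 9]{alper-kresch}. First I would invoke \cite{Sno82}: since $x$ has closed orbit $G\cdot x$ and reductive stabilizer $G_x$, and since one of the conditions (i), (ii) holds, there is a $G_x$-invariant locally closed Stein subspace $x\in U\subset S$ for which the natural $G$-equivariant holomorphic map
$$\psi\colon G\times_{G_x}U\longrightarrow S,\qquad [g,u]\longmapsto g\cdot u,$$
is étale onto a $G$-saturated open neighborhood of $G\cdot x$. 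Here $G\times_{G_x}U:=(G\times U)/G_x$ carries the residual $G$-action by left translation on the first factor; it is a holomorphic fiber bundle over the homogeneous space $G/G_x$ with fiber $U$, hence a finite-dimensional analytic space, and $[U/G_x]$ is an analytic stack with atlas $U$.

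Next I would record the canonical equivalence of analytic stacks
$$[\,(G\times_{G_x}U)/G\,]\;\xrightarrow{\sim}\;[\,U/G_x\,],$$
valid for the closed complex subgroup $G_x\subset G$ exactly as over a field: the projection $G\times_{G_x}U\to G/G_x$ is $G$-equivariant, so giving a $G$-torsor $P\to T$ together with a $G$-equivariant map $P\to G\times_{G_x}U$ is the same as giving a reduction of structure group of $P$ to $G_x$, i.e.\ a $G_x$-torsor $Q\subseteq P$ together with a $G_x$-equivariant map $Q\to U$ --- which is precisely an object of $[U/G_x](T)$. (Equivalently, $G\times_{G_x}U\to[U/G_x]$ is a $G$-torsor.) Under this equivalence the point of $[U/G_x]$ determined by $x\in U$ corresponds to the class of $x$, and composing $\psi$ with the atlas map $G\times_{G_x}U\to[(G\times_{G_x}U)/G]$ yields a morphism of analytic stacks
$$[\psi]\colon [U/G_x]\;\cong\;[\,(G\times_{G_x}U)/G\,]\longrightarrow [S/G]$$
sending $x$ to $x$; chasing definitions shows that $[\psi]$ is the morphism induced by the $G_x$-equivariant inclusion $U\hookrightarrow S$, i.e.\ the map appearing in the statement.

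It then remains to check that $[\psi]$ is representable and étale. Both properties may be verified after base change along the smooth surjective presentation $S\to[S/G]$, and a direct computation with the defining groupoids identifies
$$[\,(G\times_{G_x}U)/G\,]\times_{[S/G]}S\;\cong\;G\times_{G_x}U$$
as analytic spaces over $S$, the structure map being $\psi$. Since $\psi$ is a morphism of analytic spaces that is étale by the first step, it follows that $[\psi]$ is representable and étale, which finishes the argument.

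The only genuinely nontrivial ingredient is Snow's theorem, so the main work --- and the main potential obstacle --- is to confirm that the hypotheses under which it is proved in \cite{Sno82} are subsumed by the alternatives (i), (ii) above (reductivity of $G_x$, together with either reductivity of $G$ or the equivariant local immersion $h$ embedding the orbit $G\cdot x$): it is exactly in those two situations that one can build the slice $U$ by splitting the orbit tangent space off an equivariant Stein embedding of a $G$-invariant neighborhood of the closed orbit. Granting this, Steps~2 and~3 are purely formal manipulations with quotient stacks and descent of étaleness along a smooth atlas.
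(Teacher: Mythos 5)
Your proposal is correct and follows exactly the route the paper intends: Theorem \ref{quost} is presented there as the ``stacky version'' of Snow's analytic slice theorem \cite{Sno82} with no written-out proof, and your three steps (Snow's slice $U$ with $G\times_{G_x}U\to S$ étale onto a saturated neighborhood, the equivalence $[(G\times_{G_x}U)/G]\simeq[U/G_x]$, and verification of representability and étaleness after base change along the atlas $S\to[S/G]$) supply precisely the standard details, mirroring the algebraic template of \cite[Lemma 9]{alper-kresch} that the paper cites. The hypotheses (i)/(ii) are indeed just the cases in which Snow establishes the slice, as the paper's own remark after the theorem confirms.
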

\begin{remark} a) When Condition $(i)$ holds, the stabilizer $G_x$ in the above theorem is automatically a reductive subgroup of $G$ by \cite{Mat60}.

b) Condition $(i)$ in fact implies Condition $(ii)$ by \cite[Proposition 2.5]{Sno82}. We do not know whether the reductivity   assumption on $G$ in Condition $(i)$ could be weakened so that it still implies Condition $(ii)$.

c) Concerning Condition $(ii)$, in the algebraic setting, one even has the existence of a $G$-equivariant closed embedding of $X$ into a finite-dimensional representation of $G$ in the first place (cf. \cite[Proposition 1.9]{Bri09}). If such an embedding existed in the analytic setting, the problem would be reduced to the algebraic case.
\end{remark}
\subsection{\'{E}tale global quotient} 
\begin{corollary}\label{egq} Let $X\rightarrow \stX$ be an analytic stack satisfying the assumptions of either Theorem \ref{dms} or Theorem \ref{quost}. Assume further that $\stX$ is compact, i.e. $X$ is compact in the usual sense. Then there exists an étale surjective morphism $[S/G]\rightarrow \stX$ where $S$ is an analytic space and $G$ is a finite group (respectively, a reductive complex Lie group).
\end{corollary}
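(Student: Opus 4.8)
The plan is to combine the local étale quotient presentations supplied by Theorem \ref{dms} or Theorem \ref{quost} with the compactness of $\stX$ to extract a finite covering, and then to assemble the pieces into a single global quotient stack. First I would observe that for each point $x \in \stX$ we are handed an étale morphism $f_x \colon [S_x/G_x] \to (\stX, x)$ with $G_x$ finite (in the Deligne--Mumford case) or a reductive complex Lie group (in the quotient case), and $S_x$ an analytic space; in the latter case $S_x$ is moreover a locally closed Stein subspace. Composing with the smooth presentation $S_x \to [S_x/G_x]$, I get a collection of étale morphisms $S_x \to \stX$ whose images form an open cover of the underlying topological space of $\stX$. Since $X$ is compact by hypothesis and $X \to \stX$ is surjective, the topological space of $\stX$ is compact, so finitely many of these images $V_1, \dots, V_n$ already cover $\stX$; write the corresponding data as $[S_i/H_i] \xrightarrow{f_i} \stX$ for $i = 1, \dots, n$.

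Next I would form the disjoint union. Set $G := H_1 \times \cdots \times H_n$, which is again a finite group (respectively a reductive complex Lie group, since a finite product of reductive complex Lie groups is reductive — its representations are still rational), and let each factor $H_i$ act on $S_i$ as before, with the other factors acting trivially; then $H_i$ acts on $S_i$ and $G$ acts on $S_i$ through the projection $G \to H_i$, so $[S_i/H_i] \cong [S_i/G]^{\,\prime}$ after an induction-of-groups step. More precisely, the clean way to package this is: set $S := \coprod_{i=1}^n (G \times_{H_i} S_i)$, where $G \times_{H_i} S_i$ is the induced space (the quotient of $G \times S_i$ by the anti-diagonal $H_i$-action), which carries a free residual $G$-action; then $[(G \times_{H_i} S_i)/G] \cong [S_i/H_i]$ canonically. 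Since $S$ is a finite disjoint union of analytic spaces it is again an analytic space, $G$ acts on it, and $[S/G] \cong \coprod_{i=1}^n [S_i/H_i]$.

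Finally I would define $f \colon [S/G] \to \stX$ to be $f_i$ on the $i$-th component. Étaleness of $f$ is local on the source and target, and on each component it coincides (up to the isomorphism $[(G \times_{H_i} S_i)/G] \cong [S_i/H_i]$) with the étale morphism $f_i$ from Theorem \ref{dms} or Theorem \ref{quost}, so $f$ is étale. Surjectivity is immediate since the images of the $f_i$ were chosen to cover $\stX$. This yields the desired étale surjective morphism $[S/G] \to \stX$ with $S$ an analytic space and $G$ a finite group (respectively a reductive complex Lie group).

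The main obstacle I anticipate is purely bookkeeping rather than conceptual: making the induction-of-groups construction $S_i \rightsquigarrow G \times_{H_i} S_i$ and the isomorphism $[(G \times_{H_i} S_i)/G] \cong [S_i/H_i]$ precise in the analytic category, i.e. checking that $G \times_{H_i} S_i$ is genuinely an object of $(Ana)$ with a holomorphic $G$-action (straightforward when $H_i$ is finite, and still fine when $H_i$ is a reductive complex Lie group acting freely on $G \times S_i$ on the right, since the quotient is then a holomorphic principal bundle over $G/H_i$ with fibre $S_i$). One should also take a moment to confirm that a finite product of reductive complex Lie groups is reductive in the sense used in Theorem \ref{quost} — namely that all finite-dimensional holomorphic representations are rational — which follows because a representation of a product restricts to a representation of each factor. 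Everything else is formal from the étale-local statements already established.
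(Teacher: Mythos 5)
Your argument is correct and is essentially the paper's: the printed proof simply defers to the proof of \cite[Theorem 4.21]{ahr2}, which is exactly this ``finite subcover by compactness, then induce all the local group actions up to a common group and take a disjoint union'' construction that you spell out. The only cosmetic slip is calling the compositions $S_x \rightarrow \stX$ étale in the reductive case — there $S_x \rightarrow [S_x/G_x]$ is smooth of positive relative dimension, so the composite is only smooth — but all you actually use is that the images (equivalently, the images of the étale maps $[S_x/G_x]\rightarrow \stX$) are open, which still holds.
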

\begin{proof}
It follows immediately from Theorem \ref{dms}(Theorem \ref{quost}, respectively) together with the compactness of $\stX$ (cf. \cite[Theorem 4.21]{ahr2} and its proof).
\end{proof}
\begin{remark} More generally, with the same proof, one can even show that Corollary \ref{egq} holds true for any compact analytic stack $\stX$ with reductive stabilizers such that near any point $x \in \stX$, $\stX$ is étale-locally equivalent to a quotient stack $[S_x/G_x]$ where $S_x$ is an analytic space together with an action of the stabilizer $G_x$ of $x$.

\end{remark}

\section{Teichm\"uller, Riemann and Kuranishi moduli stacks}\label{lastsec}
\subsection{Teichm\"uller, Riemann and Kuranishi spaces}\label{s3.3}
For the rest of the note, we fix a connected compact oriented differentiable manifold $M$ of even dimension which admits an integrable complex structure. Let $TM$ be its tangent bundle. Then it is well-known that each integrable complex structure on $M$ corresponds to an integrable complex operator $J:  TM \rightarrow TM$ such that 
\begin{enumerate}
\item[(i)] $J^2 =\mathrm{Id}$,
 \item[(ii)]  $J$ is compatible with the orientation of $M$,
 \item[(iii)]  $[T^{1,0},T^{1,0} ] \subset T^{1,0}$ where $T^{1,0}=\lbrace v-iJv\mid v\in TM  \rbrace$. 
\end{enumerate} 
Denote the set of such integrable complex operators by $\mathscr{I}(M)$ and for each point $J \in \mathscr{I}(M)$, the associated complex manifold by $X_J$. The Teichm\"uller (resp. Riemann) space is defined to be the quotient $\mathscr{I}(M)/\mathrm{Diff}^{0}(M)$ (resp, $\mathscr{I}(M)/\mathrm{Diff}^{+}(M)$) where $\mathrm{Diff}^{0}(M)$ (resp, $\mathrm{Diff}^{+}(M)$) is the group of diffeomorphisms of $M$, isotropic to the identity (resp. of diffeomorphisms of $M$ preserving the orientation). 

Now, let $J_0$ be a point in $\mathscr{I}(M)$ and $V_0$ an open neighborhood of $J_0$ in $\mathscr{I}(M)$. We are interested in integrable complex structures sufficiently close to $J_0$, which due to the pioneering work of Kodaira-Spencer-Kuranishi can be entirely encoded as elements $\omega$ of the space $A^{0,1}(\Theta_{J_0})$ of $(0,1)$-forms with values in the holomorphic tangent bundle $\Theta_{J_0}$ of $X_{J_0}$, satisfying the Maurer-Cartan equation 
$$\overline{\partial}\omega + \frac{1}{2}[\omega,\omega]=0.$$
Imposing an hermitian metric on $\Theta_{J_0}$, gives rise to the existence of a formal adjoint $\overline{\partial}^*$ of $\overline{\partial}$. Let $U_0$ be a neighborhood of $0$ in $A^{0,1}(\Theta_{J_0})$. Then the Kuranishi space is defined to be the
$$K_0 =\lbrace \omega \in U_0 \mid \overline{\partial}\omega + \frac{1}{2}[\omega,\omega]=\overline{\partial}^*\omega =0 \rbrace.$$
We now state Kuranishi’s fundamental results in \cite{Kur65} and \cite{kuranishi-deformation} in the formulation of \cite{meersseman-the}, which is more convenient for our purpose.
\begin{theorem}\cite[Theorem 3.4]{meersseman-the}  \label{theorem3.1}Let  $L_0$ be a complementary closed complex vector space of $H^0(X_{J_0},\Theta_{J_0})$ in $A^{0,0}(\Theta_{J_0})$. Then if $V_0$ and $U_0 $ are small enough, there exists an open neighborhood $W$ of $0$ in $L_0$ such that 
\begin{enumerate}
\item[(i)] $K_0$ is a finite-dimensional analytic subspace of $V_0$,
\item[(ii)] There exists an analytic isomorphism 
$$\Phi_0: V_0 \rightarrow K_0\times W_0 $$ such that 
   \begin{enumerate}
   \item[(ii.a)] the inverse map is given by 
     \begin{align*}
K_0 \times W_0 \rightarrow V_0 \\ 
(J,v)\mapsto J\cdot e(v)
\end{align*} where $e$ is the exponential map from $W_0\subset A^{0,0}(\Theta_{J_0})$ to $\mathrm{Diff}^0(M)$ and the action of $\mathrm{Diff}^0(M)$ on $\mathscr{I}(M)$ is given by 
$J\cdot f:=df^{-1}\circ J\circ  df.$
  \item[(ii.b)] the composition $K_0 \hookrightarrow V_0 \overset{\Phi_0}{\rightarrow}  K_0\times W_0 \overset{\mathrm{pr}_1}{\rightarrow} K_0$ is the identity map.
   \end{enumerate}
 \end{enumerate}

\end{theorem}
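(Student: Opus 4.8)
The plan is to reconstruct Kuranishi's original elliptic argument in the form adapted to this statement. First I would pass to Banach completions---say the Hölder spaces $C^{k,\alpha}$ (or the Sobolev spaces $L^{2}_{k}$)---of the spaces $A^{0,p}(\Theta_{J_0})$, so that $\overline{\partial}$ and its formal adjoint $\overline{\partial}^{*}$ become bounded operators and elliptic estimates are available. With the fixed Hermitian metric I would invoke Hodge theory for the Dolbeault complex $\overline{\partial}\colon A^{0,p}(\Theta_{J_0})\to A^{0,p+1}(\Theta_{J_0})$: the Laplacian $\Delta_{\overline{\partial}}=\overline{\partial}\,\overline{\partial}^{*}+\overline{\partial}^{*}\overline{\partial}$ is elliptic, its kernel $\mathscr{H}^{0,p}$ is finite-dimensional and isomorphic to $H^{p}(X_{J_0},\Theta_{J_0})$, and there is an orthogonal decomposition
\[
A^{0,1}(\Theta_{J_0})=\mathscr{H}^{0,1}\oplus\operatorname{Im}\overline{\partial}\oplus\operatorname{Im}\overline{\partial}^{*},
\]
together with a Green operator $G$ inverting $\Delta_{\overline{\partial}}$ off the harmonic part.

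For part (i) I would set up the Kuranishi map. The Maurer-Cartan operator $F(\omega)=\overline{\partial}\omega+\tfrac12[\omega,\omega]$ has linearization $\overline{\partial}$ at the origin, so I would replace the system $F(\omega)=0,\ \overline{\partial}^{*}\omega=0$ by the fixed-point equation
\[
\omega=\eta+\tfrac12\,\overline{\partial}^{*}G[\omega,\omega],\qquad \eta\in\mathscr{H}^{0,1}.
\]
Since the right-hand side is an analytic map that is contracting for $\omega$ small, the implicit function theorem in Banach spaces produces a unique solution $\omega=\omega(\eta)$ depending analytically on the finite-dimensional parameter $\eta$, and one checks $\overline{\partial}^{*}\omega(\eta)=0$ automatically because $\overline{\partial}^{*}\eta=0$ and $(\overline{\partial}^{*})^{2}=0$. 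Substituting back, the residual Maurer-Cartan condition reduces to the vanishing of the harmonic projection of $[\omega(\eta),\omega(\eta)]$, i.e.\ to a finite system of analytic equations on $\mathscr{H}^{0,1}$ valued in $\mathscr{H}^{0,2}\cong H^{2}(X_{J_0},\Theta_{J_0})$. This exhibits $K_0$ as the analytic germ cut out by the obstruction map inside the finite-dimensional space $\mathscr{H}^{0,1}$, giving (i); elliptic regularity ensures the solutions are smooth, and holomorphy of the construction makes them complex-analytic.

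For the slice statement (ii) I would study the action map
\[
\Psi\colon K_0\times W_0\longrightarrow V_0,\qquad (J,v)\longmapsto J\cdot e(v),
\]
and prove it is a local analytic isomorphism at $(J_0,0)$ via the inverse function theorem. The tangent space to $V_0\subset\mathscr{I}(M)$ at $J_0$ is the space of infinitesimal integrable deformations $\{\omega:\overline{\partial}\omega=0\}=\mathscr{H}^{0,1}\oplus\operatorname{Im}\overline{\partial}$. Differentiating $\Psi$ at $(J_0,0)$, the $K_0$-direction contributes the tangent space to the slice, namely $\mathscr{H}^{0,1}=\ker\overline{\partial}\cap\ker\overline{\partial}^{*}$, while the $W_0$-direction contributes the infinitesimal gauge action $v\mapsto\overline{\partial}v$; because $L_0$ is a complement to $H^{0}(X_{J_0},\Theta_{J_0})=\ker\overline{\partial}$ in $A^{0,0}(\Theta_{J_0})$, this map is injective with image exactly $\operatorname{Im}\overline{\partial}$. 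Hence the differential is an isomorphism onto $\mathscr{H}^{0,1}\oplus\operatorname{Im}\overline{\partial}$, and the inverse function theorem (in the Banach completions, followed by elliptic regularity) yields the analytic inverse $\Phi_0=\Psi^{-1}$, which is (ii.a). Property (ii.b) is then immediate, since $\Psi(J,0)=J$ for $J\in K_0$ forces $\mathrm{pr}_1\circ\Phi_0|_{K_0}=\mathrm{id}$.

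The hard part will not be any single computation but the analytic bookkeeping that makes the two function-theorem arguments rigorous at once: one must choose the completions so that $\overline{\partial}$, $\overline{\partial}^{*}$, the bracket, the Green operator $G$ and the exponential $e$ are simultaneously bounded and analytic, shrink $V_0,U_0,W_0$ to a common neighborhood on which all local statements hold, and then apply elliptic regularity to descend the a priori merely Hölder (or Sobolev) solutions to genuine smooth---and, using the holomorphic structure, analytic---families. Checking that the harmonic decompositions and the obstruction map remain compatible and analytic across these completions is where the technical care concentrates.
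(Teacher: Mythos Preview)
The paper does not prove this theorem at all: it is quoted verbatim as \cite[Theorem 3.4]{meersseman-the}, which in turn is Meersseman's repackaging of Kuranishi's results from \cite{Kur65,kuranishi-deformation}; there is no argument given in the paper beyond the citation. So there is no ``paper's proof'' to compare your proposal against.

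That said, your sketch is a faithful outline of Kuranishi's original elliptic argument, and it is essentially what one would find if one unrolled the references. The fixed-point construction $\omega(\eta)=\eta+\tfrac12\overline{\partial}^{*}G[\omega(\eta),\omega(\eta)]$ and the reduction of the Maurer--Cartan equation to the vanishing of the harmonic projection in $\mathscr{H}^{0,2}$ is exactly Kuranishi's proof of (i). For (ii) your inverse-function-theorem argument on $\Psi(J,v)=J\cdot e(v)$ is also the standard route; the identification of $d\Psi_{(J_0,0)}$ with the direct sum $\mathscr{H}^{0,1}\oplus\operatorname{Im}\overline{\partial}=\ker\overline{\partial}$ is correct (the Zariski tangent space to $K_0$ at $0$ is the full $\mathscr{H}^{0,1}$ because the obstruction map is quadratic), and your observation that $L_0$ complementary to $H^0$ forces $v\mapsto\overline{\partial}v$ to be injective with image $\operatorname{Im}\overline{\partial}$ is the reason one takes that particular complement. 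The caveats you flag at the end---compatible Banach completions, analyticity of the bracket and of the exponential, and elliptic regularity to return to $C^\infty$---are precisely where the work lies in Kuranishi's papers, and you have identified them correctly.
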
 
\begin{remark} \label{remark3.2} We can equip a complex structure on the product $M\times K_0$ to obtain a family often called the Kuranishi family $\mathscr{K}_0\rightarrow (K_0,0)$ of $X_{J_0}$. Furthermore, this family is semi-universal in the sense that any other deformation of $X_{J_0}$ is obtained by pullback of this family under a holomorphic map whose differential is unique. Finally, if the stabilizer of of $J_0$, or equivalently the automorphism group $\Aut(X_{J_0})$ of $X_{J_0}$ is linearly reductive then the family $\mathscr{K}_0 \rightarrow (K_0,0)$ can be made $\Aut(X_{J_0})$-equivariant, extending the original $\Aut(X_{J_0})$-action on $X_{J_0}$ (cf. \cite{doan-equivariant}). In particular, we obtain a compatible holomorphic  $\Aut(X_{J_0})$-action on the Kuranishi space $K_0$. It also gives another proof of \cite[Theorem 3.5]{meersseman-kuranishi}.
\end{remark}
Theorem \ref{theorem3.1} together with the following lemma plays a crucial role in the construction of the Teichm\"uller, Riemann and Kuranishi stacks in the sequel. Denote by $\Aut^0(X_{J_0})$ the connected component of the automorphism group $\Aut(X_{J_0})$  of $X_{J_0}$ and set 
$\Aut^{1}(X_{J_0}):= \Aut(X_{J_0}) \cap \mathrm{Diff}^{0}(M)$. 
\begin{lemma}\cite[Lemma 4.2]{meersseman-the}  \label{lemma3.3} There exist isomorphisms 
\begin{align*}
W_0 \times \mathrm{Aut}^{k}(X_{J_0})   \rightarrow \mathcal{D}_i  \\ 
(J,v)\mapsto J\cdot e(v)
\end{align*}  where $\mathcal{D}_k $ is an open neighborhood of $\mathrm{Aut}^{k}(X_{J_0}) $ in $\mathrm{Diff}^0(M)$ for $k=0,1$.
\end{lemma}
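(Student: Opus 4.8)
This is a ``slice'' (tubular-neighbourhood) statement for the closed Lie subgroup $\mathrm{Aut}^{k}(X_{J_0})$ of $\mathrm{Diff}^0(M)$ in the direction transverse to its Lie algebra, so the plan is to realise $\mathcal{D}_k$ as the image of the map of the statement,
\[
F\colon W_0\times\mathrm{Aut}^{k}(X_{J_0})\longrightarrow\mathrm{Diff}^0(M),\qquad (v,\phi)\longmapsto e(v)\circ\phi,
\]
where $e$ is the exponential map of Theorem~\ref{theorem3.1}, and to prove that $F$ is a local isomorphism along $\{0\}\times\mathrm{Aut}^{k}(X_{J_0})$ which becomes injective once $W_0$ is shrunk. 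For $k=0,1$ one has $\mathrm{Aut}^0(X_{J_0})\subset\mathrm{Aut}^{k}(X_{J_0})$, so all the groups in play share the Lie algebra $H^0(X_{J_0},\Theta_{J_0})$ of holomorphic vector fields. First I would check the infinitesimal statement: under $T_{\mathrm{id}}\mathrm{Diff}^0(M)\cong A^{0,0}(\Theta_{J_0})$, the construction of $e$ in Theorem~\ref{theorem3.1} shows that $de_0$ identifies $L_0$ with a closed complement of $H^0(X_{J_0},\Theta_{J_0})$ in $A^{0,0}(\Theta_{J_0})$ --- this is precisely the transversality of the ``gauge directions'' $\{J_0\}\times W_0$ to $K_0$ inside $V_0$. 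Since $F(-,\mathrm{id})=e$ while $F(0,-)$ is the inclusion $\mathrm{Aut}^{k}(X_{J_0})\hookrightarrow\mathrm{Diff}^0(M)$, the differential $dF_{(0,\mathrm{id})}$ is the isomorphism $L_0\oplus H^0(X_{J_0},\Theta_{J_0})\xrightarrow{\ \sim\ }A^{0,0}(\Theta_{J_0})$; at an arbitrary point $(0,\phi)$ I would transport this by the right translation $R_\phi(g)=g\circ\phi$, a diffeomorphism of $\mathrm{Diff}^0(M)$ that preserves $\mathrm{Aut}^{k}(X_{J_0})$ and satisfies $F(v,\psi\circ\phi)=R_\phi(F(v,\psi))$, so that $dF_{(0,\phi)}$ is again an isomorphism. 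The same equivariance reduces the whole discussion to a neighbourhood of $(0,\mathrm{id})$, so the possible non-compactness of $\mathrm{Aut}^{k}(X_{J_0})$ causes no difficulty.

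The hard part will be promoting this infinitesimal statement to a genuine local isomorphism, because $\mathrm{Diff}^0(M)$ is not a Banach manifold; I would deal with it exactly as Kuranishi does in \cite{Kur65,kuranishi-deformation}. One first passes to the Banach (or ILH) manifolds $\mathrm{Diff}^{0,s}(M)$ of diffeomorphisms of Sobolev class $H^s$, where the ordinary inverse function theorem applies to the completed map, whose differential at $(0,\mathrm{id})$ is still an isomorphism; one then descends to the smooth and analytic category by elliptic regularity, observing that in a factorisation $f=e(v)\circ\phi$ the diffeomorphism $\phi=e(v)^{-1}\circ f$ is holomorphic and $v$ satisfies an elliptic equation, so that both are smooth as soon as $f$ is and depend holomorphically on $f$ --- the very regularity mechanism already underlying Theorem~\ref{theorem3.1}. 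Alternatively one may invoke directly the analytic/ILH framework of \cite{meersseman-the}, in which $\mathrm{Diff}^0(M)$ is treated as an analytic object and the relevant inverse function theorem is built in. Either way, $F$ becomes a local isomorphism near every point of $\{0\}\times\mathrm{Aut}^{k}(X_{J_0})$ for one and the same $W_0$.

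It then remains to obtain global injectivity and to conclude. If $e(v_1)\circ\phi_1=e(v_2)\circ\phi_2$ then $e(v_2)^{-1}\circ e(v_1)=\phi_2\circ\phi_1^{-1}\in\mathrm{Aut}^{k}(X_{J_0})$, and for $W_0$ small this element is so $C^0$-close to $\mathrm{id}$ that --- $\mathrm{Aut}^{k}(X_{J_0})$ being an embedded subgroup --- it lies in the neighbourhood on which the $(0,\mathrm{id})$-case just proved yields a local product structure, and uniqueness there forces $v_1=v_2$ and $\phi_1=\phi_2$. An injective local isomorphism is an isomorphism onto its image $\mathcal{D}_k:=F(W_0\times\mathrm{Aut}^{k}(X_{J_0}))$; moreover, since $F$ carries a neighbourhood of each $(0,\phi)$ isomorphically onto a neighbourhood of $\phi$ in $\mathrm{Diff}^0(M)$, the set $\mathcal{D}_k$ is open and contains a neighbourhood of every $\phi\in\mathrm{Aut}^{k}(X_{J_0})$, i.e.\ it is an open neighbourhood of $\mathrm{Aut}^{k}(X_{J_0})$ in $\mathrm{Diff}^0(M)$. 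This gives the asserted isomorphism, the argument for $k=0$ and $k=1$ being identical.
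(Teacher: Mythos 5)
The paper does not actually prove this lemma: it is quoted verbatim (typo included --- the displayed map should read $(v,g)\mapsto e(v)\circ g$ rather than $(J,v)\mapsto J\cdot e(v)$) from \cite[Lemma 4.2]{meersseman-the}, so there is no internal proof to compare against. Your argument --- using the splitting $A^{0,0}(\Theta_{J_0})=H^0(X_{J_0},\Theta_{J_0})\oplus L_0$ to see that $dF_{(0,\mathrm{id})}$ is an isomorphism, transporting by right translation along the (possibly non-compact) closed subgroup $\mathrm{Aut}^{k}(X_{J_0})$, invoking the inverse function theorem on Sobolev completions with elliptic regularity to descend to the smooth category, and shrinking $W_0$ to upgrade local to global injectivity --- is the standard slice argument and is, in substance, the proof in the cited source; the only place where serious work is hidden is the inverse function theorem in the Fr\'echet setting, which you correctly identify and delegate to Kuranishi's elliptic machinery rather than resolve in detail.
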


\subsection{Teichm\"uller, Riemann and Kuranishi moduli stacks}\label{s4.1}
In this section, we recall the definitions of the Techm\"uller, Riemann and Kuranishi moduli stacks. For more details, the interested reader is referred to the beautiful paper \cite{meersseman-the} of L . Meeersseman. 

Given $S \in (Ana)$, an $M$-\textit{deformation} over $S$ is a proper and smooth morphisms of analytic spaces $\mathcal{X} \rightarrow S$ whose fibers are all compact complex manifolds diffeomorphic to $M$. Observe differentiably that such a deformation is a bundle over S with fiber $M$ and structural group $\mathrm{Diff}^{+}(M)$. It is called \textit{reduced} if the structural group is $\mathrm{Diff}^{0}(M)$. A morphism of reduced $M$-deformations is a morphism of deformations preserving the $\mathrm{Diff}^{0}(M)$-bundle structure.

The Riemann moduli stack (resp. the Teichm\"uller moduli stack) is defined as the stack $\mathscr{M}(M) \rightarrow (Ana)$ (resp. $\mathscr{T}(M) \rightarrow (Ana)$) such that 
\begin{enumerate}
\item[(i)] for each $S\in (Ana)$,  $\mathscr{M}(M)(S)$ (resp. $\mathscr{T}(M)(S)$) is the groupoid of isomorphism classes of (resp. reduced) $M$-deformations over $S$,
\item[(ii)] for each morphism $f : S'\rightarrow  S$ in $(Ana)$, $\mathscr{M}(M)(f)$ (resp. $\mathscr{T}(M)(f)$) is the obvious pullback morphism.
\end{enumerate}
\begin{remark}
\begin{enumerate} \label{remark4.1}
\item[(i)] The points of $\mathscr{M}(M)$ (resp. $\mathscr{T}(M)$) are complex manifolds $X_J$ up to holomorphic biholomorphisms (resp. holomorphic biholomorphisms $C^{\infty}$-isotropic to the identity).
\item[(ii)] For a given point $J$ in $\mathscr{M}(M)$ (resp. $\mathscr{T}(M)$), the stabiliser $G_J$ of $J$ is nothing but $\Aut(X_J)$ (resp. $\Aut^1(X_J):= \Aut(X_{J}) \cap \mathrm{Diff}^{0}(M)$). 
\end{enumerate}

\end{remark}
More than being merely stacks, the implicit analyticity of $\mathscr{M}(M)$ and $\mathscr{T}(M)$ has been recently discovered by the exceptional work of L. Meersseman. The strategy is to construct a smooth analytic groupoid and then to prove that the corresponding analytic stack is exactly $\mathscr{M}(M)$ (resp. $\mathscr{T}(M)$). From now on, we suppose that the $h^0$ is bounded (see the paragraph below \cite[Remark 2.12]{meersseman-the} for the definition of $h^0$).
\begin{theorem}\cite[Theorem 2.13 and Theorem 2.14]{meersseman-the}  The Riemann moduli stack $\mathscr{M}(M)$ and the Teichm\"uller stack $\mathscr{T}(M)$ are analytic stacks.
\end{theorem}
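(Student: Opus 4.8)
This is Meersseman's theorem; let me outline the strategy, as it will be reused below. By Proposition \ref{proposition2.9} it is enough to produce a smooth analytic groupoid $(G_1\rightrightarrows G_0)$ whose associated prestack stackifies to $\mathscr{T}(M)$ (resp. $\mathscr{M}(M)$). First I would assemble the atlas from Kuranishi charts: for every $J_0\in\mathscr{I}(M)$, Theorem \ref{theorem3.1} furnishes a finite-dimensional analytic space $K_0$ carrying the semi-universal Kuranishi family $\mathscr{K}_0\to(K_0,0)$ of $X_{J_0}$ (Remark \ref{remark3.2}), through which all integrable structures near $J_0$ factor. Since $\mathscr{I}(M)$ is Lindel\"of for the $C^\infty$-topology, countably many such charts $\{K_i\}_{i\in\NN}$ already meet every point; I would set $G_0:=\coprod_{i\in\NN}K_i$, a locally finite-dimensional analytic space, together with the morphism $G_0\to\mathscr{T}(M)$ (resp. $\mathscr{M}(M)$) classifying the $\mathscr{K}_i$. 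This morphism is surjective because every $X_J$ is, after an isotopy in the Teichm\"uller case, a fibre of some $\mathscr{K}_i$ (Theorem \ref{theorem3.1}(ii)), and it is smooth by semi-universality of the $\mathscr{K}_i$.

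Next I would describe the morphism space $G_1:=G_0\times_{\mathscr{T}(M)}G_0$ (resp. with $\mathscr{M}(M)$): a point of $G_1$ over a pair $(J\in K_i,\,J'\in K_j)$ is a biholomorphism $X_J\xrightarrow{\sim}X_{J'}$, isotopic to $\mathrm{id}_M$ in the Teichm\"uller case. Such a biholomorphism \emph{a priori} lives only in the infinite-dimensional group $\mathrm{Diff}^0(M)$, and the crux of the construction is to show that it is in fact governed by finite-dimensional analytic data: applying the Kuranishi slice of Theorem \ref{theorem3.1} at both ends and the local triviality $\mathcal{D}_k\cong W_0\times\Aut^k(X_{J_0})$ of Lemma \ref{lemma3.3} cuts the set of such biholomorphisms down to the product of a finite-dimensional ``automorphism'' factor with a slice direction, and --- using the standing hypothesis that $h^0$ is bounded --- one obtains that $G_1$ is a genuine (locally finite-dimensional) analytic space. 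The source and target maps $G_1\rightrightarrows G_0$ are then smooth, being base changes of the smooth presentation $G_0\to\mathscr{T}(M)$; so $(G_1\rightrightarrows G_0)$ is a smooth analytic groupoid, and equivalently (Lemma \ref{lemma2.8}) the diagonal of $\mathscr{T}(M)$ (resp. $\mathscr{M}(M)$) is representable.

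Finally I would identify the stack: there is a tautological morphism $[G_0/G_1]\to\mathscr{T}(M)$ (resp. $\mathscr{M}(M)$) coming from the atlas and the groupoid relation, and conversely, given an $M$-deformation $\mathcal{X}\to S$, semi-universality of the Kuranishi families produces, over a suitable open cover $\{S_\alpha\}$ of $S$, classifying maps $S_\alpha\to K_{i(\alpha)}\subset G_0$ inducing $\mathcal{X}|_{S_\alpha}$; on overlaps these differ by sections of $G_1$, hence glue to an object of $[G_0/G_1](S)$, functorially in $S$. Checking that these two assignments are mutually quasi-inverse gives $[G_0/G_1]\cong\mathscr{T}(M)$ (resp. $\mathscr{M}(M)$), and analyticity then follows from Proposition \ref{proposition2.9}.

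The step I expect to be the main obstacle is the middle one: upgrading the \emph{a priori} infinite-dimensional, merely set-theoretic space of isomorphisms between nearby $M$-deformations to a finite-dimensional analytic space with smooth structure maps. This is exactly where Kuranishi's elliptic theory (Theorem \ref{theorem3.1}), the local triviality of $\mathrm{Diff}^0(M)$ near its automorphism subgroups (Lemma \ref{lemma3.3}), and the boundedness of $h^0$ are all needed; dropping the last hypothesis, the construction yields only a weaker form of analytic object.
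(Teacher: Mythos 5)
This statement is quoted verbatim from Meersseman \cite{meersseman-the}, and the paper offers no proof of its own beyond the one-sentence strategy of constructing a smooth analytic groupoid whose stackification is $\mathscr{M}(M)$ (resp.\ $\mathscr{T}(M)$); your outline fleshes out exactly that strategy, using precisely the ingredients the paper recalls for this purpose (the countable atlas of Kuranishi spaces from Theorem \ref{theorem3.1}, the local product structure of $\mathrm{Diff}^0(M)$ near $\Aut^k(X_{J_0})$ from Lemma \ref{lemma3.3}, the boundedness of $h^0$, and Proposition \ref{proposition2.9} to pass from the groupoid to an analytic stack). Your proposal is therefore correct as a sketch and essentially identical in approach to the cited construction.
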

\begin{remark} An easy application of Proposition \ref{proposition2.9} gives the representability of the diagonal morphism associated to $\mathscr{M}(M)$ (resp. $\mathscr{T}(M)$) which is not very clear at first glance. This clarifies the representability question posed in  \cite[Section 13]{meersseman-the}.
\end{remark}

At last, we turn to the construction of the Kuranishi stack. Let $\mathrm{Diff}^0(M,\mathscr{K}_0)$ be the set of $C^{\infty}$-diffeomorphisms from $M$ to a fiber of the Kuranishi family (see Remark \ref{remark3.2}).  More precisely, an element of $\mathrm{Diff}^0(M,\mathscr{K}_0)$ is a pair $(J, F)$ where $J \in K_0$ and $F$ is a diffeomorphism from $M$ to the complex manifold $X_J$.  It is further said to be $(V_0,\mathcal{D}_k)$-\emph{admissible} (for $k=0,1$) if there exists a finite sequence $(J_i,F_i)\in \mathrm{Diff}^0(M,\mathscr{K}_0)$  (for $0\leq i \leq p$) such that
\begin{enumerate}
\item[(i)] $J_0=J$ and each $J_i \in K_0$,
\item[(ii)] Each $F_i\in \mathscr{D}_0 $ and $F=F_0\circ \ldots \circ F_p$,
\item[(iii)] $J_{i+1}=J_i\cdot F_i$.
\end{enumerate}

Consider the set $$\mathcal{A}_k:= \lbrace  (J,F)\in \mathrm{Diff}^0(M,\mathscr{K}_0) \mid  (J,F) \text{ is }  (V_0,\mathcal{D}_k)\text{-admissible}  \rbrace$$  together with maps
$$s(J,F)=J, \; t(J,F)=J\cdot F, \;  m((J,F),(J\cdot F,F'))=(J,F\circ F'),\;  i(J,F)=(J\cdot F,F^{-1})$$
with respect to which it can be proved that $$\mathcal{A}_k\overset{s}{\underset{t}\rightrightarrows} K_0$$ is a smooth analytic groupoid (cf. \cite[Proposition 4.6]{meersseman-the}).

\begin{definition} \label{definition4.4}The Kuranishi stack $\mathscr{A}_k$ is the stackification of the prestack associated to the smooth analytic groupoid
$$\mathcal{A}_k\overset{s}{\underset{t}\rightrightarrows} K_0$$  over the site $(Ana)$ for $k=0,1$.
\end{definition}

\subsection{A Luna \'{e}tale slice theorem for  $\mathscr{T}(M)$ and $\mathscr{M}(M)$  }\label{s5}
In this section, our main task is to prove the following local structure result whose spirit is inspired essentially by Theorem \ref{theorem1.1}. 
Recall that a point $J_0 \in \mathscr{T}(M)$ is a Kahl\"er point if the associated complex manifold $X_{J_0}$ is a Kahl\"er manifold.
\begin{theorem} \label{theorem5.1} Let $J_0 \in \mathscr{T}(M)$ be a point such that 
its stabilizer  $G_{J_0}$ is linearly reductive. Then there exists an \'{e}tale reprentable morphism
$$f:([S_0/G_{J_0}],0) \rightarrow (\mathscr{T}(M),J_0)$$
where $(S_0,0)$ is a pointed analytic space on which $G_{J_0}$ acts locally holomorphically and fixes $0$. If $I_0$ is further a Kahl\"er point, then $f$ can be chosen to be finite.
\end{theorem}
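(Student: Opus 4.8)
The plan is to realise $\mathscr{T}(M)$ near $J_0$ as a Kuranishi stack, recognise that Kuranishi stack as a quotient stack by means of the equivariant Kuranishi action, and then read off étaleness directly from Meersseman's presenting groupoid. Write $G:=G_{J_0}=\Aut^{1}(X_{J_0})$, which is linearly reductive by hypothesis. First I would invoke Remark \ref{remark3.2}, i.e.\ the equivariant Kuranishi theorem of \cite{doan-equivariant}: after shrinking the neighbourhoods $V_0,U_0,W_0$ of Theorem \ref{theorem3.1} one obtains a local holomorphic $G$-action on the Kuranishi space $K_0$ fixing $0$ and compatible with the Kuranishi family; let $(S_0,0)\subseteq(K_0,0)$ be a $G$-invariant open neighbourhood of $0$ on which it is defined. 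Since $J_0\cdot g=J_0$ for $g\in G\subseteq\mathrm{Diff}^{0}(M)$, we have $J\cdot g\in V_0$ for $J\in S_0$, the action is recovered from the diffeomorphism action as $g\cdot J=\mathrm{pr}_1\circ\Phi_0(J\cdot g^{-1})$, and by Theorem \ref{theorem3.1}(ii.a) there is a unique $v(g,J)\in W_0$, holomorphic in $(g,J)$, with $J\cdot g^{-1}=(g\cdot J)\cdot e(v(g,J))$.

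Next, with $k=1$ and $\mathcal{A}_1\rightrightarrows K_0$ the Kuranishi groupoid of Definition \ref{definition4.4} restricted to $S_0$, I would exhibit an isomorphism of smooth analytic groupoids over $S_0$, $\Psi\colon G\times S_0\to\mathcal{A}_1|_{S_0}$, equal to the identity on objects and sending an arrow $(g,J)$ to the pair $(J,F)$, where $F\in\mathcal{D}_1$ is the unique admissible diffeomorphism realising the move from $J$ to $g\cdot J$ whose $\Aut^{1}(X_{J_0})$-component equals $g$ in the product decomposition $\mathcal{D}_1\simeq\Aut^{1}(X_{J_0})\times W_0$ of Lemma \ref{lemma3.3} (concretely $F=g^{-1}\circ e(v(g,J))^{-1}$, up to the $W_0$-direction). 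By construction $\Psi$ intertwines the source and target maps with those of the translation groupoid $G\times S_0\rightrightarrows S_0$, and it is holomorphic since $\Phi_0$, $e$ and the $G$-action are. Its compatibility with composition, and its bijectivity, would then reduce — via the uniqueness clauses of Theorem \ref{theorem3.1} and the identity $\mathrm{pr}_1\circ\Phi_0|_{K_0}=\mathrm{id}$ of Theorem \ref{theorem3.1}(ii.b) — to the key claim: if $J,J'\in S_0$ and $J'=J\cdot F$ for an admissible $F=F_0\circ\cdots\circ F_p$ with each $F_i=\phi_i\circ e(v_i)\in\mathcal{D}_1$ and all intermediate points $J_i\in S_0$, then the $W_0$-contributions accumulated along the chain cancel out, forcing $J'=g\cdot J$ with $g=\phi_0\cdots\phi_p\in\Aut^{1}(X_{J_0})$. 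Stackifying (Proposition \ref{proposition2.9}) then yields $[S_0/G]\simeq\mathscr{A}_1$.

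Finally I would use Meersseman's construction: $\mathscr{T}(M)$ is the stackification of a smooth analytic groupoid $\mathcal{R}\rightrightarrows\coprod_i K_i$ whose object space is a disjoint union of Kuranishi charts and whose arrows are admissible diffeomorphisms between their fibres, and with one of the charts taken to be the $K_0$ above the part of $\mathcal{R}$ with source and target in $K_0$ is exactly $\mathcal{A}_1$. Hence the inclusion of the $K_0$-chart is an open immersion on objects and cartesian (fully faithful) on arrows, so by the standard formalism it induces an étale, representable morphism $j\colon\mathscr{A}_1\to\mathscr{T}(M)$ inducing the identity on the stabiliser at $0$ (the arrows $0\to 0$ in $\mathcal{A}_1$ are precisely $\Aut^{1}(X_{J_0})=G_{J_0}$, cf.\ Remark \ref{remark4.1}). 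Composing $j$ with the inverse of the isomorphism of Step~2 gives the sought $f\colon([S_0/G],0)\to(\mathscr{T}(M),J_0)$, which is étale and representable, with $G=G_{J_0}$ acting locally holomorphically on $(S_0,0)$ and fixing $0$. For the last assertion, when $X_{J_0}$ is Kähler I would choose the Kuranishi data compatibly with the Kähler structure and feed in the properness — special to Kähler families — of the Douady-type space of isomorphisms between nearby complex structures (Fujiki, Schumacher; as reflected in \cite{meersseman-the}), so that the relevant part of $\mathcal{R}$, and therefore $j$ and $f$, becomes proper; an étale proper morphism of analytic stacks is finite.

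The main obstacle is Step~2: proving that Meersseman's Kuranishi groupoid, restricted to the slice $S_0$, is \emph{exactly} the translation groupoid of the equivariant $G$-action. Two points need care — that the action furnished by \cite{doan-equivariant} is the one implicitly recorded by the arrows of $\mathcal{A}_1$, and that no spurious arrows survive, i.e.\ that an admissible chain of moves that begins and ends inside the Kuranishi slice has vanishing total $W_0$-component, which in turn requires controlling how products and inverses of the $e(v)$-factors interact with the decomposition of $\mathcal{D}_1$. Here the rigidity built into Theorem \ref{theorem3.1} (the Kuranishi slice is a genuine section, $\mathrm{pr}_1\circ\Phi_0|_{K_0}=\mathrm{id}$), together if necessary with a further coordinated shrinking of all the neighbourhoods, must be used carefully; the remaining verifications — and the verbatim argument for $\mathscr{M}(M)$, with $\Aut^{0}$, $\mathcal{D}_0$, $\mathcal{A}_0$ replacing $\Aut^{1}$, $\mathcal{D}_1$, $\mathcal{A}_1$ — are bookkeeping.
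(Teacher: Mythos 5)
Your first two steps coincide with the paper's argument: one invokes the equivariant Kuranishi family of \cite{doan-equivariant} to get a local holomorphic $\Aut^1(X_{J_0})$-action on $K_0$ fixing $0$, and then uses the decomposition of Lemma \ref{lemma3.3} to rewrite any $(V_0,\mathcal{D}_1)$-admissible chain $F=F_0\circ\cdots\circ F_p$ so that every factor lies in $\Aut^1(X_{J_0})$ (the paper does exactly your ``cancellation of $W_0$-contributions'' by the telescoping substitution $e(\xi_{i-1})\circ F_i=g_i\circ e(\xi_i)$, replacing $F_i$ by $e(\xi_{i-1})\circ F_i\circ e(-\xi_i)$). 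This identifies $\mathscr{A}_1$ with the quotient stack $[K_0/\Aut^1(X_{J_0})]$, as you propose.

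The gap is in your final step. You assert that, in Meersseman's presentation of $\mathscr{T}(M)$, ``the part of $\mathcal{R}$ with source and target in $K_0$ is exactly $\mathcal{A}_1$,'' so that the inclusion of the chart is fully faithful on arrows and the induced morphism $\mathscr{A}_1\to\mathscr{T}(M)$ is étale for formal reasons. This is false in general: the arrows of the Teichm\"uller groupoid between two points of $K_0$ form the set $\mathcal{T}=\lbrace (J,F)\in\mathrm{Diff}^0(M,\mathscr{K}_0)\mid J\cdot F\in K_0\rbrace$ of \emph{all} such diffeomorphisms, whereas $\mathcal{A}_1$ only contains those arising from admissible chains of small moves through $\mathcal{D}_1$; the inclusion $\mathcal{A}_1\subseteq\mathcal{T}$ is strict in general (this is precisely the phenomenon of exceptional points, cf. Corollary \ref{coro6.1}(ii), where the morphism is an isomorphism only at non-exceptional K\"ahlerian points). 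Consequently the étaleness (and representability) of $f_1:(\mathscr{A}_1,J_0)\to(\mathscr{T}(M),J_0)$ is not a bookkeeping consequence of an open immersion of charts; it is the content of a genuine theorem of Meersseman (\cite[Theorem 6.2]{meersseman-kuranishi}), valid after taking the neighbourhood $V$ of $J_0$ in $\mathscr{I}(M)$ open, connected and sufficiently small, and it is this same theorem that the paper cites for the finiteness of $f$ at K\"ahler points. Your proof needs either to import that result or to supply an argument controlling the extra arrows in $\mathcal{T}\setminus\mathcal{A}_1$; as written it does neither.
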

\begin{proof}  Let $\pi: \mathscr{K}_0 \rightarrow (K_0,0)$ be the Kuranishi family of the complex compact manifold $X_{J_0}$ associated to $J_0$. Remark \ref{remark4.1} tells us that $G_{J_0}$ is nothing but $\Aut^1(X_{J_0})$. By assumption, $\Aut^1(X_{J_0})$ is linearly reductive so that there exist local $\Aut^1(X_{J_0})$-actions on $\mathscr{K}_0$ and on $K_0$ such that $\pi$ is $\Aut^1(X_{J_0})$-equivariant and the restriction of the $\Aut^1(X_{J_0})$-actions on $\mathscr{K}_0$ on the central fiber  $X_{J_0}$ is exactly the initial $\Aut^1(X_{J_0})$-actions on $X_{J_0}$ (cf. \cite{doan-equivariant}). In particular, the $\Aut^1(X_{J_0})$-actions on $K_0$ fixes $0$ which gives rise to a translation groupoid
$$\Aut^1(X_{J_0}) \times K_0 \rightrightarrows K_0.$$ 
Now, we claim that it is in fact equivalent to the smooth analytic groupoid $$\mathcal{A}_1\overset{s}{\underset{t}\rightrightarrows} K_0$$ in Definition \ref{definition4.4} . Indeed, take an element $(J,F) \in \mathcal{A}_1$. By definition, there exists a finite sequence $(J_i,F_i)\in \mathrm{Diff}^0(M,\mathscr{K}_0)$  (for $0\leq i \leq p$) such that
\begin{enumerate}
\item[(i)] $J_0=J$ and each $J_i \in K_0$,
\item[(ii)] Each $F_i\in \mathscr{D}_0 $ and $F=F_0\circ \ldots \circ F_p$,
\item[(iii)] $J_{i+1}=J_i\cdot F_i$.
\end{enumerate}
For the sake of Lemma \ref{lemma3.3}, the following decomposition is available
$$F_0=g_0\circ e(\xi_0)$$ where $g_0\in \Aut^1(X_{J_0})$ and $\xi_0\in W_0$. Replace $F_0$ by $F_0\circ e(-\xi_0)$ then $F_0  \in \Aut^1(X_{J_0}$. For each $i$ ($i=1,\ldots,p$), applying Lemma \ref{lemma3.3} once again we get $$e(\xi_{i-1})\circ F_i=g_i \circ e(\xi_i)$$  where $g_i \in \Aut^1(X_{J_0})$ and $\xi_i\in W_0$.  Replace $F_i$ by $e(\xi_{i-1})\circ F_i \circ e(-\xi_i)$ then $F_i \in \Aut^1(X_{J_0})$. In this way, we obtain a new $(V_0,\mathcal{D}_1)$-admissible pair $(J,F)$ such that every $F_i$ is in $\Aut^1(X_{J_0})$. In other words, it means precisely that $(\mathcal{A}_1\overset{s}{\underset{t}\rightrightarrows} K_0)$ is equivalent to $(\Aut^1(X_{J_0}) \times K_0 \rightrightarrows K_0)$. Consequently, the Kuranishi stack $\mathscr{A}_1$ is isomorphic to the quotient stack $[K_0/ \Aut^1(X_{J_0})]$ associated to the translation groupoid.

By the definition of the Teich\"uller moduli stack (cf. Section \ref{s4.1}), a neighborhood of $I_0$ in $\mathscr{T}(M)$ corresponds to reduced $M$-deformations where all the complex structures associated to the fibers are in a neighborhood $V$ of $I_0$ in $\mathscr{I}(M)$ (cf. Section \ref{s3.3} for the definition of $\mathscr{I}(M)$). Now, if we take $V$ to be open, connected and sufficiently small, an easy application of \cite[Theorem 6.2]{meersseman-kuranishi} provides an \'etale representable morphism $$f_1: ( \mathscr{A}_1,J_0) \rightarrow (\mathscr{T}(M),J_0), $$ or equivalently an étale representable morphism $$f:( [K_0/ \Aut^1(X_{J_0})],0) \rightarrow (\mathscr{T}(M),J_0). $$ This finishes the proof of the first statement. The second statement follows from the fact the morphism $f_1$ is a finite morphism when $J_0$ is a Kahl\"er point (cf. \cite[Theorem 6.2]{meersseman-kuranishi}).
\end{proof} 
\begin{theorem} \label{theorem5.2} Let $J_0 \in \mathscr{M}(M)$ be a point such that the connected component $G_{J_0}^0$ of its stabilizer  $G_{J_0}$ is linearly reductive. Then there exists an étale representable morphism
$$f:([S_0/G_{J_0}^0],0) \rightarrow (\mathscr{M}(M),J_0)$$
where $(S_0,0)$ is a pointed analytic space on which $G_{J_0}^0$ acts locally holomorphically and fixes $0$.
\end{theorem}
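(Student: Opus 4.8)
The plan is to identify $[K_0/G_{J_0}^0]$ with the Kuranishi stack $\mathscr{A}_0$ of Definition~\ref{definition4.4} and then to compose with the forgetful morphism $\mathscr{T}(M)\to\mathscr{M}(M)$. First I would apply the equivariant Kuranishi theory of \cite{doan-equivariant} to $G_{J_0}^0=\Aut^0(X_{J_0})$: being connected and linearly reductive, it is a reductive complex Lie group acting holomorphically on $X_{J_0}$, so the Kuranishi family $\pi\colon\mathscr{K}_0\to(K_0,0)$ can be made $\Aut^0(X_{J_0})$-equivariant with $\pi$ equivariant and the action on the central fiber the original one; in particular $0\in K_0$ is fixed, which produces a translation groupoid $\Aut^0(X_{J_0})\times K_0\rightrightarrows K_0$. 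Next I would run the argument of Theorem~\ref{theorem5.1} essentially verbatim, with $\mathcal{A}_0$, $\mathcal{D}_0$ and $\Aut^0(X_{J_0})$ in place of $\mathcal{A}_1$, $\mathcal{D}_1$ and $\Aut^1(X_{J_0})$ and with Lemma~\ref{lemma3.3} applied for $k=0$: every $(V_0,\mathcal{D}_0)$-admissible pair can be rewritten as a chain all of whose factors lie in $\Aut^0(X_{J_0})$, so that $\mathcal{A}_0\rightrightarrows K_0$ is equivalent to the above translation groupoid and hence $\mathscr{A}_0\cong[K_0/\Aut^0(X_{J_0})]$. Finally, exactly as in the proof of Theorem~\ref{theorem5.1} but in the case $k=0$ (and after shrinking $V$), \cite[Theorem 6.2]{meersseman-kuranishi} supplies an étale representable morphism $(\mathscr{A}_0,J_0)\to(\mathscr{T}(M),J_0)$.

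The remaining ingredient is that the forgetful morphism $q\colon\mathscr{T}(M)\to\mathscr{M}(M)$, which sends a reduced $M$-deformation to its underlying $M$-deformation, is representable and étale. Given an analytic space $U$ and a morphism $U\to\mathscr{M}(M)$ classifying an $M$-deformation $\mathcal{Z}\to U$ with associated principal $\mathrm{Diff}^{+}(M)$-bundle $P\to U$, the fiber product $\mathscr{T}(M)\times_{\mathscr{M}(M)}U$ is the sheaf on $(Ana/U)$ of $\mathrm{Diff}^{0}(M)$-reductions of $\mathcal{Z}\to U$, and this sheaf is represented by the associated bundle $P/\mathrm{Diff}^{0}(M)\to U$; since its fiber is the discrete group $\mathrm{Diff}^{+}(M)/\mathrm{Diff}^{0}(M)=\pi_0(\mathrm{Diff}^{+}(M))$, this is a covering map, hence a local isomorphism, i.e., an étale morphism of analytic spaces (with possibly infinite fibers). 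Composing, $f\colon[K_0/\Aut^0(X_{J_0})]\cong\mathscr{A}_0\to\mathscr{T}(M)\xrightarrow{q}\mathscr{M}(M)$ is representable and étale, and its distinguished point is sent to $J_0$ because the underlying complex manifold is $X_{J_0}$ in each of the three stacks (cf.\ Remark~\ref{remark4.1}). Taking $S_0=K_0$, on which $G_{J_0}^0$ acts locally holomorphically fixing $0$, then finishes the proof.

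The step I expect to cost the most effort is the étaleness of $q$: one must make precise, within the analytic category, the identification of the functor of $\mathrm{Diff}^{0}(M)$-reductions of a family with the functor of sections of $P/\mathrm{Diff}^{0}(M)\to U$, and check that the latter is represented by that covering space even when the mapping class group $\pi_0(\mathrm{Diff}^{+}(M))$ is infinite, so that $q$ is an infinite-sheeted local isomorphism rather than a finite one. A subsidiary point, tacit in the first paragraph, is to confirm that the $\Aut^0(X_{J_0})$-action on $K_0$ furnished by \cite{doan-equivariant} coincides with the action implicit in Meersseman's groupoid $\mathcal{A}_0$, which should follow from the uniqueness in the semi-universality of $\mathscr{K}_0\to(K_0,0)$. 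An alternative route, closer in spirit to \cite{meersseman-the,meersseman-kuranishi}, would bypass $q$ entirely: one builds directly a $\mathrm{Diff}^{+}(M)$-admissible Kuranishi groupoid presenting $\mathscr{M}(M)$ near $J_0$ and observes that the subgroupoid supported over the identity component $\Aut^0(X_{J_0})\subset\Aut(X_{J_0})$ is open, hence an étale subgroupoid equivalent to the translation groupoid, the remaining components of $\Aut(X_{J_0})$ being absorbed into the resulting étale morphism --- which is precisely why it suffices to assume that $G_{J_0}^0$, rather than the full $G_{J_0}$, is linearly reductive.
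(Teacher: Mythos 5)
Your proposal is correct and follows essentially the same route as the paper: identify $\mathscr{A}_0$ with the translation quotient $[K_0/\Aut^0(X_{J_0})]$ via the $k=0$ version of the argument in Theorem \ref{theorem5.1}, invoke \cite[Theorem 6.2]{meersseman-kuranishi} for an étale representable morphism into $\mathscr{T}(M)$, and compose with the forgetful morphism $\mathscr{T}(M)\to\mathscr{M}(M)$, whose étaleness is checked exactly as you do, by identifying the relevant fiber product over an analytic space $B$ with a covering with discrete fiber $\mathrm{Diff}^{+}(M)/\mathrm{Diff}^{0}(M)$. The point you flag about infinite fibers is already accommodated by the paper's convention that étale means local isomorphism with not necessarily finite fibers.
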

\begin{proof} On one hand, by the same argument as in the proof of Theorem \ref{theorem5.1}, we can prove that the Kuranishi stack $\mathscr{A}_0$ is nothing but the quotient stack $([K_0/ \Aut^0(X_{J_0})],J_0) $. Once again, due to \cite[Theorem 6.2]{meersseman-kuranishi}, we obtain an \'{e}tale representable morphism 
$$g_0: ( \mathscr{A}_0,J_0) \rightarrow (\mathscr{T}(M),J_0), $$ or equivalently an étale representable morphism $$g_0:( [K_0/ \Aut^0(X_{J_0})],J_0) \rightarrow (\mathscr{T}(M),J_0). $$ 

On the other hand, near $I_0$, the local groupoids for $\mathscr{T}(M)$ and $\mathscr{M}(M)$ can be taken to be the following groupoids
$$\mathcal{T}\overset{s}{\underset{t}\rightrightarrows} K_0 \text{ and } \mathcal{M}\overset{s}{\underset{t}\rightrightarrows} K_0\text{, respectively}$$
where
$$\mathcal{T}:= \lbrace  (J,F)\in \mathrm{Diff}^0(M,\mathscr{K}_0) \mid  J\cdot F \in K_0 \rbrace$$  and 
$$\mathcal{M}:= \lbrace  (J,F)\in \mathrm{Diff}^+(M,\mathscr{K}_0) \mid  J\cdot F \in K_0 \rbrace$$ and the source and the target are the obvious candidates. This gives rise to a natural inclusion $$h_0:(\mathscr{T}(M),J_0) \rightarrow \mathscr{M}(M).$$  By a direct adaptation of the proof of \cite[Theorem 6.2]{meersseman-kuranishi}, we also can prove $h_0$ is an \'{e}tale representable morphism of analytic stacks.    Indeed, for any morphism $u$ from an analytic space $B$ to $(\mathscr{T}(M),J_0)$, the morphism $f_1$ in the following fibered product $$\xymatrix{
B\times_u (\mathscr{T}(M),J_0)  \ar[d]^{f_1} \ar[r] & (\mathscr{T}(M),J_0) \ar[d]^-{h_0} \\
B \ar[r]^-{u}  			& \mathscr{M}(M) 
}$$
can be identified with the natural projection 
$$\pi_1: B\times \mathrm{Diff}^+(M)/\mathrm{Diff}^0(M) \rightarrow B $$ on the first factor. Note that the second factor is discrete. This implies in particular that the latter morphism is \'{e}tale and so is the former.

Composing this morphism with $g_0$, we get again an étale representable morphism 
$$f:( [K_0/ \Aut^0(X_{J_0})],0) \rightarrow (\mathscr{M}(M),J_0). $$ 
This ends the proof.
\end{proof}

Theorem \ref{theorem5.1} and Theorem \ref{theorem5.2} allow us to derive étale-locally many interesting properties of the Teichm\"uller stack and the Riemann moduli stack near points with linearly reductive stabilizer. These are apparently not quite obvious at first glance (see \cite[Section 1.3]{ahr2} for a comparison in the algebraic setting). For simplicity, throughout the rest of this section, by $\mathfrak{X}$, we mean either the Teichm\"uller stack or the Riemann moduli stack and by $G$, we mean either $\mathrm{Aut}^1(X_0)$ or $\mathrm{Aut}^0(X_0)$ (respectively) whenever $X_0$ is the reference point.

\begin{corollary} \label{coro6.1}Let $I_0$ be a $\mathbb{C}$-point of $\mathfrak{X}$ with linearly reductive stabilizer. Then,

(i) There is an étale neigborhood of $I_0$  with a closed embedding into a smooth analytic stack.

(ii) If $I_0$  is a non-exceptional K\"ahlerian point (cf. \cite[Definition 6.4]{meersseman-kuranishi}) then the Teichm\"uller stack $(\mathscr{T}(M),I_0)$ is isomorphic to a quotient stack.

(iii) Any representation $V$ of $G$ extends to a vector bundle over an étale neighborhood of $I_0$ .

(iv) In a neighborhood of $I_0$, the cotangent complex $\mathcal{L}_{\mathfrak{X}}^{\bullet}$ of $\mathfrak{X}$ is étale-locally quasi-isomorphic to a complex of two terms.


\end{corollary}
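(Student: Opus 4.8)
The plan is to reduce every assertion to the corresponding statement about the quotient stack $[S_0/G]$ (with $G$ linearly reductive) supplied by Theorem \ref{theorem5.1} or Theorem \ref{theorem5.2}, and then to invoke the standard equivariant/GIT-type input over the Stein base $S_0$. Fix the étale representable morphism $f\colon([S_0/G],0)\to(\mathfrak X,I_0)$; since étaleness is what lets us pull properties back and forth, "an étale neighborhood of $I_0$" will always mean such an $[S_0/G]$. For (i), I would first observe that by the analytic Kuranishi construction $S_0=K_0$ sits as a closed analytic subspace of a neighborhood $W\subset A^{0,1}(\Theta_{J_0})$ cut out $G$-equivariantly (the Maurer--Cartan plus gauge-fixing equations are $G$-equivariant because the $G$-action on $\mathscr K_0$ and $K_0$ comes from Remark \ref{remark3.2}); passing to quotient stacks gives a closed immersion $[K_0/G]\hookrightarrow[W/G]$, and $[W/G]$ is smooth since $W$ is a smooth analytic space with a holomorphic $G$-action. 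For (iii), I would use that $G$ is linearly reductive and $V$ a (rational, by \cite{HM59}) finite-dimensional representation: the trivial bundle $S_0\times V\to S_0$ is $G$-equivariant, hence descends to a vector bundle on $[S_0/G]$, which is the desired extension over the étale neighborhood; that its restriction to the residual gerbe $BG$ recovers $V$ is immediate.

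For (iv), the key point is that the cotangent complex is compatible with the étale (in fact smooth) pullback $f$, so $\mathcal L_{\mathfrak X}^\bullet$ near $I_0$ is computed by $\mathcal L_{[S_0/G]}^\bullet$. There is the standard distinguished triangle relating $\mathcal L_{[S_0/G]}$ to $\mathcal L_{S_0}$ and the adjoint bundle $\mathfrak g^\vee$ of $G$ (coming from the atlas $S_0\to[S_0/G]$ with relative cotangent complex concentrated in degree $0$ equal to $\mathcal O_{S_0}\otimes\mathfrak g^\vee$); since $S_0=K_0$ is cut out in the smooth $W$ by the Maurer--Cartan/gauge equations, $\mathcal L_{S_0}$ is itself étale-locally a two-term complex $[\mathcal I/\mathcal I^2\to\Omega^1_W|_{S_0}]$ in degrees $[-1,0]$. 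Splicing these, and using linear reductivity of $G$ to split off the $\mathfrak g^\vee$-piece as a genuine direct summand after taking a $G$-invariant presentation (so no higher $\mathrm{Ext}$'s obstruct the truncation), one obtains étale-locally a complex concentrated in two consecutive degrees; I would be slightly careful that "two terms" here means amplitude in $[-1,1]$, matching the fact that $[S_0/G]$ has a $1$-dimensional (stacky) automorphism direction and that $K_0$ is lci-presented.

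Assertion (ii) is the most delicate. Here the extra hypothesis is that $I_0$ is a non-exceptional K\"ahlerian point, and the relevant input is \cite[Theorem 6.2]{meersseman-kuranishi} together with \cite[Definition 6.4]{meersseman-kuranishi}: at such a point the étale morphism $f_1$ from the Kuranishi stack can be promoted to an honest isomorphism with a neighborhood of $I_0$ in $\mathscr T(M)$ — the non-exceptionality is precisely what rules out the extra identifications that would otherwise make $f$ only étale and not an open immersion. Granting that, $(\mathscr T(M),I_0)\cong(\mathscr A_1,J_0)\cong[K_0/\Aut^1(X_{J_0})]$ by the identification of the Kuranishi stack with a quotient stack established in the proof of Theorem \ref{theorem5.1}, which is exactly the claim. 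The main obstacle I anticipate is (iv): making precise, in the analytic (non-Noetherian, transcendental) setting, that the cotangent complex behaves well under the smooth atlas and that linear reductivity genuinely yields a $G$-equivariant two-term model rather than just a two-term model after forgetting the $G$-action — one must be sure the relevant $\mathcal O$-modules of invariants are coherent and that the splitting of the Lie-algebra summand is canonical enough to globalize over the étale neighborhood. I would handle this by working on the Stein atlas $K_0$, where coherence and the exactness of the invariants functor are available, and only then descending.
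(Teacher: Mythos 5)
Your proposal follows the paper's proof in structure and in substance: every part is reduced to the quotient presentation $([S_0/G],0)\to(\mathfrak{X},I_0)$ from Theorems \ref{theorem5.1}--\ref{theorem5.2}, (ii) is obtained exactly as in the paper from the fact that at a non-exceptional K\"ahlerian point the \'etale morphism from the Kuranishi stack is an isomorphism, and your descent of the equivariant trivial bundle $S_0\times V$ in (iii) is the same construction as the paper's pullback along $[S_0/G]\to BG$. Part (i), which the paper dismisses as trivial, is fleshed out correctly by you via the $G$-equivariant closed embedding of $K_0$ into a smooth model (just make sure to land in the finite-dimensional ambient space guaranteed by Theorem \ref{theorem3.1}, not in $A^{0,1}(\Theta_{J_0})$ itself).

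The one point where you drift from both the paper and the statement is (iv). The paper's argument is: kill $\mathcal{L}^{\bullet}_{[S_0/G]/\mathfrak{X}}$ by \'etaleness, then identify $\mathcal{L}^{\bullet}_{[S_0/G]}$ with the two-term complex $\Omega_{[S_0/G]}\to\mathfrak{g}^{\vee}_{[S_0/G]}$ in degrees $0$ and $1$, where the degree-$0$ term is the (coherent, not locally free) sheaf of differentials of the possibly singular $K_0$ and the map is the derivative of the action. You instead resolve $K_0$ inside the smooth ambient space and splice in the conormal term, which lands you in amplitude $[-1,1]$ --- that is three terms, and your own text wavers between ``two consecutive degrees'' and ``amplitude $[-1,1]$''. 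As written this does not deliver the literal conclusion ``a complex of two terms''; you should either adopt the paper's normalization (take $\Omega_{[S_0/G]}$ itself as the degree-$0$ term, absorbing the singularities of $K_0$) or explicitly flag that with the full cotangent complex of a singular $K_0$ one only gets a three-term model. The reductivity-based splitting of the $\mathfrak{g}^{\vee}$-summand you invoke is not needed in the paper's argument and does not by itself reduce the amplitude.
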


\begin{remark}
From combining $(iv)$ and $(ii)$ in Corollary $\ref{coro6.1}$, we deduce that near a non-exceptional Kahl\"erian point, the cotangent complex of the Teichm\"uller moduli stack is actually quasi-isomorphic to a $2$-term complex.
\end{remark}
Intuitively, everything should work in an apparently similar way for holomorphic vector bundles on a fixed complex compact manifold $X_0$. In fact, corresponding versions of Theorem \ref{theorem3.1} and Lemma \ref{lemma3.3} are also available for this case (cf. \cite{miy-kuranishi}). Therefore, by using the same strategy as Meersseman's one, one can show  that the Kuranishi stack, the Teichm\"{u}ller stack and the Riemann stack of holomorphic vector bundles on $X_0$ are analytic in the sense of Definition \ref{definition2.6}.
Now, on one hand, inspired by Meersseman's work, we can start talking about conditions under which the Kuranishi stack is isomorphic to the Teichm\"{u}ller stack. Of course, this time, conditions should be imposed on the holomorphic vector bundle under consideration. On the other hand, in a previous work \cite{doan-group}, we also proved that for any holomorphic vector bundle on which a linearly reductive group acts holomorphically, the existence of an equivariant structure on the Kuranishi family is guaranteed. Thus, by a similar argument, we can prove that locally around a point whose stabilizer is linearly reductive, the Teichm\"{u}ller stack and the Riemann moduli stack of holomorphic vector bundles are quotient stacks. This justifies the paragraph below \cite[Remark 4.2]{doan-group}. More generally,  for any object under deformation, whose a theory of Kuranishi's type is available (cf. \cite{MN}), similar results can be provided for the corresponding analytic stacks of such objects.

Next, we give some typical examples illustrating the main results.
\begin{example} The most simple example is the Deligne-Mumford analytic stack $[\mathbb{H}/\mathrm{SL}(2,\mathbb{Z})]$ introduced in Example \ref{dmex}. By definition, it is nothing but the Riemman moduli stack of marked complex structures on $M:=\mathbb{S}^1\times \mathbb{S}^1 $. Moreover, the stabilizers are all finite groups and then étale-locally they are quotient stacks by Theorem \ref{theorem5.2}. 
\end{example}
\begin{example} (Why reductivity)\label{example6.2}
Consider the product of two $2$-dimensional spheres $M: = \mathbb{S}^2\times \mathbb{S}^2$ and its Teichm\"{u}ler stack $\mathscr{T}(M)$. Note that $\mathbb{C}$-points of  $\mathscr{T}(M)$ can be identified with $\mathbb{Z}$. Each integer a corresponds to the Hirzebruch surface $\mathbb{F}_{2a}$ (cf. \cite[Example 12.6]{meersseman-the}). Take $I_0$ to be $1$ (or equivalently $\mathbb{F}_2$). According to Corollary \ref{coro6.1}(ii), locally around $I_0$,  $\mathscr{T}(M)$ is isomorphic to the Kuranishi stack $(\mathscr{A}_1,I_0)$ (cf. Definition \ref{definition4.4}). However, $\mathscr{A}_1$ is a quotient stack if and only if the action of $\mathrm{Aut}^1(\mathbb{F}_2)$ extends to a compatible  $\mathrm{Aut}^1(\mathbb{F}_2)$-action on the Kuranishi $S_0$ of $\mathbb{F}_2$. The latter is not true in general due the counter-example constructed in \cite{doan-a} (or \cite{Doan23}). So we can not expect an isomorphism in this situation. Moreover, what prevents the group extension is the fact that  $\mathrm{Aut}^1(\mathbb{F}_2)$ is $\mathbb{C}^3 \rtimes \mathrm{GL}(2,\mathbb{C})$ being clearly non-reductive. In brief, the reductivity assumption is somehow optimal.
\end{example}
\begin{example} With the same data as in Example \ref{example6.2}, we take $I_0$ now to be $0$ corresponding to the Kahl\"{e}r manifold $\mathbb{P}^1\times \mathbb{P}^1$. First, it is interesting to see that $0$ is actually an open dense subset in the Techm\"{u}ller space in the sense that any other point can be deformed onto $0$ by an arbitrary small deformation (cf. \cite[Example 2.16]{kodaira-complex}). Moreover, its Kuranishi space is reduced to that point and its automorphism group is connected and reductive.  Thus, a neighborhood of $0$ in the Techm\"{u}ller stack is nothing but the classifying stack of its automorphism group.\end{example}


Finally, we have encountered the situation that a complex structure with non-reductive automorphism group can be deformed to a new one with reductive automorphism group (the second Hirzebruch surface, for example). From what we have done so far, this is somehow an obstacle which prevents the moduli stack from locally admitting a good quotient presentation. Hence, it is natural to ask whether the converse can happen for any kind of geometric objects under deformation. In the case of complex structures, this question amounts to justifying whether there exists a compact complex manifold with reductive automorphism group, of which some deformation has a fiber with non-reductive automorphism group. The answer turns out to be yes if one considers a Hopf surface of type $\mathrm{IV}$ which can be deformed to another Hopf surface of type $\mathrm{II}_b$ (see \cite{Weh81} for more details).
\subsection{Comments on assumtionps of Question \ref{conjecture1.2}} \label{s4.4} To end the note, we record here comments on which kind of assumptions should be imposed to clarify a bit the word ``appropriate" in Question \ref{conjecture1.2}, observed by D. Rydh.  For algebraic groups, we have Chevalley's theorem saying that every smooth connected algebraic group is an extension of an abelian variety by a linear algebraic group (or equivalently, affine group) but this fails in the analytic situation (cf. \cite{Che60}). However, we also have the ``anti-affine" classification (cf. \cite{Bri09a} ) saying that every connected algebraic group is an extension by a linear algebraic group by an ``anti-affine group" (they are always smooth, connected and commutative
and have no non-constant regular functions). It seems that the same holds for
complex groups (cf. \cite{Mor65}): every connected complex Lie group is an extension of a Stein group by a connected commutative group without non-constant holomorphic functions. Theorem \ref{theorem1.1} is false without the assumption that all stabilizers are affine (cf. \cite[Example 1.5]{ahr2}). Similarly, it is expected that the analytic version would fail if not all
stabilizers are Stein.



\end{document}